\newtheorem{theorem}{Theorem}[section]
\newtheorem{lemma}[theorem]{Lemma}
\newtheorem{proposition}[theorem]{Proposition}
\newtheorem{corollary}[theorem]{Corollary}
\theoremstyle{definition}
\newtheorem{definition}[theorem]{Definition}
\newtheorem{example}[theorem]{Example}
\newtheorem{remark}[theorem]{Remark}
\newtheorem{convention}[theorem]{Convention}
\def\bal{\boldsymbol{\mathit{ba}\ell}}
\def\ubal{\boldsymbol{\mathit{uba}\ell}}
\def\dbal{\boldsymbol{\mathit{dba}\ell}}
\def\cubal{\boldsymbol{\mathit{cuba}\ell}}
\def\balg{\boldsymbol{\mathit{balg}}}
\def\basic{\boldsymbol{\mathit{basic}}}
\def\ubasic{\boldsymbol{\mathit{ubasic}}}
\def\mbasic{\boldsymbol{\mathit{mbasic}}}
\def\Set{{\sf Set}}
\def\CABA{{\sf CABA}}
\def\At{{\sf At}}
\newcommand{\func}[1]{\operatorname{#1}}
\def\int{{\sf int}}
\newcommand\prim{\func{Prim}}
\def\Id{\func{Id}}
\newcommand\C{{\sf Comp}}
\newcommand\SC{{\sf SComp}}
\newcommand\creg{{\sf CReg}}
\newcommand\KHaus{{\sf KHaus}}
\begin{document}

\title[A generalization of Gelfand-Naimark-Stone duality]{A generalization of Gelfand-Naimark-Stone duality to completely regular spaces}
\author{G.~Bezhanishvili}
\address{New Mexico State University}
\email{guram@nmsu.edu}

\author{P.~J.~Morandi}
\address{New Mexico State University}
\email{pmorandi@nmsu.edu}

\author{B.~Olberding}
\address{New Mexico State University}
\email{bruce@nmsu.edu}
\date{}

\subjclass[2010]{54D15; 54D35; 54C30; 06F25; 13J25}
\keywords{Completely regular space; compactification, continuous real-valued function; $\ell$-algebra}

\begin{abstract}
Gelfand-Naimark-Stone duality establishes a dual equivalence between the category $\KHaus$ of compact Hausdorff spaces
and the category $\ubal$ of uniformly complete bounded archimedean $\ell$-algebras. We extend this duality to the category
$\creg$ of completely regular spaces. This we do by first introducing basic extensions of bounded archimedean $\ell$-algebras
and generalizing Gelfand-Naimark-Stone duality to a dual equivalence between the category $\ubasic$ of
uniformly complete basic extensions and the category $\C$ of compactifications of completely regular spaces. We then introduce
maximal basic extensions and prove that the subcategory $\mbasic$ of $\ubasic$ consisting of
maximal basic extensions is dually equivalent to the subcategory $\SC$ of $\C$ consisting of Stone-\v{C}ech compactifications.
This yields the desired dual equivalence for completely regular spaces since $\creg$ is equivalent to $\SC$.
\end{abstract}

\maketitle

\section{Introduction}

Let $\creg$ be the category of completely regular spaces and continuous maps, and let $\KHaus$ be its full subcategory consisting of
compact Hausdorff spaces. Let also $\bal$ be the category of bounded archimedean $\ell$-algebras and unital $\ell$-algebra homomorphisms,
and let $\ubal$ be its full subcategory consisting of uniformly complete objects in $\bal$ (see Section~2 for definitions). There is a
contravariant functor $C^*:\creg\to\bal$ sending a completely regular space $X$ to the $\ell$-ring $C^*(X)$ of bounded continuous real-valued
functions, and a contravariant functor $Y:\bal\to\creg$ sending $A\in\bal$ to the space of maximal $\ell$-ideals. The functors $C^*$ and
$Y$ define a contravariant adjunction between $\creg$ and $\bal$ such that $C^*(X) \in \ubal$ for each $X \in \KHaus$ and
$Y(A) \in \KHaus$ for each $A \in \bal$.
Thus, the contravariant adjunction between $\creg$ and $\bal$ restricts to a dual equivalence between $\KHaus$ and $\ubal$. This dual
equivalence is known as Gelfand-Naimark-Stone duality (see \cite{GN43,Sto40,GJ60,Joh82,BMO13a}). We note that if $X\in\KHaus$, then
every continuous real-valued function on $X$ is bounded. Therefore, $C^*(X)$ is equal to the $\ell$-ring $C(X)$ of all continuous
real-valued functions. Thus, the functor $C^*:\creg\to\bal$ restricts to the functor $C:\KHaus\to\ubal$, and we arrive at the
following commutative diagram.
\[
\begin{tikzcd}[column sep=5pc, row sep = 1pc]
\creg \arrow[r, shift left = .5ex, "C^*"] & \bal \arrow[l, shift left = .5ex, "Y"] \\
\KHaus \arrow[u, hookrightarrow] \arrow[r, shift left = .5ex, "C"] & \ubal \arrow[u, hookrightarrow] \arrow[l, shift left = .5ex, "Y"]
\end{tikzcd}
\]

The purpose of this article is to extend Gelfand-Naimark-Stone duality to completely regular spaces. For this it is not
sufficient to only work with the $\ell$-ring $C^*(X)$. The space of maximal $\ell$-ideals of $C^*(X)$ is the
Stone-\v{C}ech compactification $\beta X$, and hence $X$ is not recoverable as the space of maximal $\ell$-ideals of $C^*(X)$.
To recover $X$ additional data is required, which we show
can be provided by also working with the $\ell$-ring $B(X)$ of bounded functions on $X$. The idempotents of $B(X)$
are exactly the characteristic functions of subsets of $X$, so the boolean algebra $\Id(B(X))$ of idempotents of $B(X)$ is isomorphic
to the powerset $\wp(X)$. The singletons $\{x\}$ are the atoms of $\wp(X)$, which correspond to the primitive idempotents of $B(X)$.
Therefore, $X$ is in bijective correspondence with the primitive idempotents of $B(X)$.
Thus, to recover the topology on $X$ it is sufficient to give an algebraic description of the embedding $C^*(X)\to B(X)$.
Since $C^*(X)$ is isomorphic to $C(\beta X)$, it suffices to give an algebraic description of the monomorphism $C(\beta X)\to B(X)$
arising from the embedding $X\to\beta X$. More generally, given a compactification $e:X\to Y$, we will give an algebraic description
of the monomorphism $C(Y)\to B(X)$ arising from $e$ (it is a monomorphism since $e[X]$ is dense in $Y$).

For this we will first characterize the algebras $B(X)$ as Dedekind complete (bounded) archimedean $\ell$-algebras in which the
boolean algebra of idempotents is atomic. We term such algebras \emph{basic algebras}, and prove that the category $\balg$ of
basic algebras and the unital $\ell$-algebra homomorphisms between them that are \emph{normal} (meaning that they preserve all existing
joins, and hence meets) is dually equivalent to the category $\sf Set$ of sets and functions. This provides a ring-theoretic version
of Tarski duality between $\sf Set$ and the category $\sf CABA$ of complete and atomic boolean algebras and complete boolean
homomorphisms.

We next extend the focus from algebras in $\bal$ to what we call \emph{basic extensions}. These are extensions
$\alpha:A \to B$ such that $A\in\bal$, $B\in\balg$, and $\alpha[A]$ is join-meet dense in $B$.
Each compactification $e:X\to Y$ gives rise to the basic extension $e^\flat : C(Y)\to B(X)$. In Theorem~\ref{duality cptf}
we prove that this correspondence extends to a dual adjunction between the category $\C$ of compactifications and the
category $\basic$ of basic extensions, which restricts to a dual equivalence between $\C$ and the full subcategory
$\ubasic$ of $\basic$ consisting of \emph{uniformly complete basic extensions}.

We further consider the full subcategory $\sf SComp$ of $\C$ consisting of Stone-\v{C}ech compactifications, and prove
that the dual equivalence between $\C$ and $\ubasic$ restricts to a dual equivalence between $\sf SComp$ and
the full subcategory $\mbasic$ of $\ubasic$ consisting of \emph{maximal basic extensions},
which can be characterized as
those uniformly complete basic extensions $\alpha:A\to B$ for which the only elements of $B$ that are both a join and meet
of elements from $\alpha[A]$ are the elements of $\alpha[A]$ itself.
Since the category $\sf CReg$ of completely regular spaces is equivalent to $\sf SComp$, we conclude
that the maximal basic extensions provide an algebraic counterpart of the completely regular spaces.

This article can be viewed as a ring-theoretic companion to our article \cite{BMO19a}, in which we show that the category of completely regular
spaces is dually equivalent to the category of what we call maximal de Vries extensions in \cite{BMO19a}, a certain class of
extensions of complete Boolean algebras equipped with a proximity relation. In so doing we extend de Vries duality to completely regular spaces
in direct analogy with how we extend Gelfand-Naimark-Stone duality to completely regular spaces in the present paper. In a future paper,
we will make the analogy between these two settings more precise. For the present, however, these two dualities for completely regular spaces
remain independent of each other in our approaches.

The article is organized as follows.
In Section 2 we recall Gelfand-Naimark-Stone duality and describe its restriction to the full subcategories of $\KHaus$ consisting of
Stone spaces and extremally disconnected spaces.
Section 3 introduces basic algebras and their fundamental properties. We prove that $\balg$ is dually equivalent to $\sf Set$, which is a
ring-theoretic version of Tarski duality between $\sf CABA$ and $\sf Set$.
In Section 4 we define basic extensions and uniformly complete basic extensions, and show that the category $\ubasic$ of uniformly
complete basic extensions is a reflective subcategory of the category $\basic$ of basic extensions.
We also define a functor from $\C$ to $\basic$, and show that it lands in $\ubasic$.
In Section 5 we produce a functor going the other way, from $\basic$ to $\C$.
With these functors in place, we show in Section 6 that there is a dual adjunction between $\basic$ and $\C$, which
restricts to a dual equivalence between $\ubasic$ and $\C$.
Finally, building on the previous sections, we obtain in Section~7 our generalization of Gelfand-Naimark-Stone duality between the category
$\sf CReg$ of completely regular spaces and the category $\mbasic$ of maximal basic extensions, a special class of basic extensions that
we describe in detail in Section 7.

\section{Gelfand-Naimark-Stone duality}

In this section we recall Gelfand-Naimark-Stone duality. This requires recalling a number of basic facts about ordered rings and algebras.
For general references we use \cite{Bir79, GJ60, Joh82, LZ71}. For a detailed study of the category $\bal$, which plays a central role for
our purposes, we refer to \cite{BMO13a}.

For a completely regular space $X$, let $C(X)$ be the ring of continuous real-valued functions, and let $C^*(X)$ be the subring of $C(X)$
consisting of bounded functions. We note that if $X$ is compact, then $C^*(X)=C(X)$. There is a natural partial order $\le$ on $C(X)$ lifted
from $\mathbb R$. Then $C^*(X)$ with the restriction of $\le$ is a bounded archimedean $\ell$-algebra, where we recall that
\begin{itemize}
\item A ring $A$ with a partial order $\le$ is an \emph{$\ell$-ring} (lattice-ordered ring) if $(A,\le)$ is a lattice, $a\le b$ implies
$a+c \le b+c$ for each $c$, and $0 \leq a, b$ implies $0 \le ab$.
\item An $\ell$-ring $A$ is \emph{bounded} if for each $a \in A$ there is $n \in \mathbb{N}$ such that $a \le n\cdot 1$ (that is, $1$ is
a \emph{strong order unit}).
\item An $\ell$-ring $A$ is \emph{archimedean} if for each $a,b \in A$, whenever $na \le b$ for each $n \in \mathbb{N}$, then $a \le 0$.
\item An $\ell$-ring $A$ is an \emph{$\ell$-algebra} if it is an $\mathbb R$-algebra and for each $0 \le a\in A$ and $0\le r\in\mathbb R$ we
have $ra\ge 0$.
\item Let $\bal$ be the category of bounded archimedean $\ell$-algebras and unital $\ell$-algebra homomorphisms.
\end{itemize}

\begin{convention}
For a continuous map $\varphi:X\to Y$ between completely regular spaces let $\varphi^* : C^*(Y)\to C^*(X)$ be given by $\varphi^*(f) = f \circ \varphi$.
\end{convention}
Then $\varphi^*$ is a unital $\ell$-algebra homomorphism, and we have a contravariant functor $C^* : \creg \to \bal$ which sends each $X\in\creg$ to the $\ell$-algebra $C^*(X)$, and each continuous map $\varphi:X\to Y$ to the unital $\ell$-algebra
homomorphism $\varphi^* : C^*(Y)\to C^*(X)$. We denote the restriction of $C^*$ to $\KHaus$ by $C$ since for $X\in\KHaus$ we have $C^*(X)=C(X)$.

The functor $C^*$ has a contravariant adjoint which is defined as follows.
For $A\in\bal$ and $a\in A$, we recall that
the \emph{absolute value} of $a$ is defined as $|a|=a\vee(-a)$, that an ideal $I$ of $A$ is an \emph{$\ell$-ideal} if $|a|\le|b|$ and
$b\in I$ imply $a\in I$, and that $\ell$-ideals are exactly the kernels of $\ell$-algebra homomorphisms. Let $Y_A$ be the space of
maximal $\ell$-ideals of $A$, whose closed sets are exactly sets of the form
\[
Z_\ell(I) = \{M\in Y_A\mid I\subseteq M\},
\]
where $I$ is an $\ell$-ideal of $A$. The space $Y_A$ is often referred to as the \emph{Yosida space} of $A$, and it is well known that
$Y_A\in\KHaus$.

\begin{convention}
For a unital $\ell$-algebra homomorphism $\alpha:A\to B$ let $\alpha_*:Y_B\to Y_A$ be
given by $\alpha_*(M)=\alpha^{-1}(M)$.
\end{convention}

Then $\alpha_*$ is continuous, and we have a contravariant functor $Y:\bal\to\creg$ which sends each $A\in\bal$ to the
compact Hausdorff space $Y_A$, and each unital $\ell$-algebra homomorphism $\alpha:A\to B$ to the continuous map $\alpha_*:Y_B\to Y_A$.

For $A\in\bal$ and $X\in\creg$, we have $\mathrm{hom}_{\bal}(A,C^*(X))\simeq\mathrm{hom}_{\creg}(X,Y_A)$. Thus, $Y$ and $C^*$ define a
contravariant adjunction between $\bal$ and $\creg$.
As we already pointed out, $Y_A \in \KHaus$. In fact, each compact Hausdorff space is homeomorphic to $Y_A$ for some $A \in \bal$. To see this, for $X\in\creg$,
associate with each $x\in X$ the maximal $\ell$-ideal
\[
M_x := \{ f \in C^*(X) \mid f(x) = 0 \}.
\]
Then $\xi_X:X\to Y_{C^*(X)}$ given by $\xi_X(x)=M_x$ is an embedding, and it is a homeomorphism iff $X$ is compact.

To describe which objects of $\bal$ are isomorphic to $C^*(X)$ for some $X$, we observe that for a maximal $\ell$-ideal $M$ of $A\in\bal$, we have $A/M\simeq\mathbb R$. Therefore, with each $a\in A$,
we can associate $f_a\in C(Y_A)$ given by $f_a(M)=a+M$. Then $\zeta_A:A\to C(Y_A)$ given by $\zeta_A(a)=f_a$ is a unital $\ell$-algebra
homomorphism, which is a monomorphism since the intersection of maximal $\ell$-ideals is 0. It is an isomorphism iff the norm on $A$ defined by
\[
||a||=\inf\{r\in\mathbb R\mid |a|\le r\}
\]
is complete. In such a case we call $A$ \emph{uniformly complete}, and denote the full subcategory of $\bal$ consisting of uniformly complete
$\ell$-algebras by $\ubal$. Thus, $A\in\ubal$ iff $A$ is isomorphic to $C^*(X)$ for some $X\in\KHaus$.
Consequently, the contravariant adjunction $(C^*,Y)$ between $\creg$ and
$\bal$ restricts to a dual equivalence $(C,Y)$ between $\KHaus$ and $\ubal$, and
we arrive at the following celebrated result:

\begin{theorem} [Gelfand-Naimark-Stone duality]
The categories $\KHaus$ and $\ubal$ are dually equivalent, and the dual equivalence is established by the functors $C$ and $Y$.
\end{theorem}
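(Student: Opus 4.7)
The plan is to show that the contravariant adjunction $(C^*, Y)$ already constructed between $\creg$ and $\bal$ restricts to a dual equivalence between $\KHaus$ and $\ubal$. Since the unit $\xi_X:X\to Y_{C^*(X)}$ and counit $\zeta_A:A\to C(Y_A)$ of this adjunction have been defined, the task amounts to verifying that $\xi_X$ is a homeomorphism precisely when $X\in\KHaus$ and $\zeta_A$ is an isomorphism of $\ell$-algebras precisely when $A\in\ubal$, and then invoking naturality to deduce the dual equivalence.

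First I would verify that $\xi_X$ is a homeomorphism for every $X\in\KHaus$. Its injectivity and continuity hold for any completely regular space by Urysohn-type separation (using $C^*(X)$ to separate points from closed sets). The essential new point when $X$ is compact is surjectivity. Given $M\in Y_{C^*(X)}$, consider the family of zero-sets $\{Z(f)\mid f\in M\}$; since $M$ is a proper $\ell$-ideal and $f^2+g^2\in M$ whenever $f,g\in M$, this family has the finite intersection property. Compactness of $X$ yields a common point $x$, and $M\subseteq M_x$ forces $M=M_x$ by maximality of $M$. Since $Y_{C^*(X)}\in\KHaus$ and $X$ is compact Hausdorff, a continuous bijection is automatically a homeomorphism.

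Next I would verify that $\zeta_A$ is an isomorphism when $A\in\ubal$. The excerpt notes that $\zeta_A$ is an injective unital $\ell$-algebra homomorphism, so only surjectivity is at issue. The image $\zeta_A[A]$ is a unital $\ell$-subalgebra of $C(Y_A)$ which separates points of $Y_A$: if $M_1\ne M_2$ are distinct maximal $\ell$-ideals, pick $a\in M_1\setminus M_2$, so $f_a(M_1)=0\ne f_a(M_2)$. The key identity $\|a\|=\|f_a\|_\infty$, derived from the fact that each quotient $A/M$ is (isomorphic to) $\mathbb R$ together with the strong order unit and archimedean hypotheses, shows $\zeta_A$ is an isometry. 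Therefore uniform completeness of $A$ makes $\zeta_A[A]$ a uniformly closed subalgebra of $C(Y_A)$. The Stone-Weierstrass theorem in its lattice (Kakutani-Krein) form then forces $\zeta_A[A]=C(Y_A)$.

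The main obstacle is the Stone-Weierstrass step, together with the supporting norm identity $\|a\|=\|f_a\|_\infty$, since these move the proof from formal adjoint bookkeeping into genuine analysis. Once the identity is in place, uniform closedness of the image is automatic and surjectivity reduces to a standard Stone-Weierstrass argument on the compact Hausdorff space $Y_A$. Finally, the naturality of $\xi$ and $\zeta$ is a routine verification from the definitions of $\varphi^*$ and $\alpha_*$: for continuous $\varphi:X\to Y$ and a homomorphism $\alpha:A\to B$, one checks $\xi_Y\circ\varphi=\varphi^{**}\circ\xi_X$ and $\zeta_B\circ\alpha=\alpha^{**}\circ\zeta_A$ by unwinding the definitions of $M_x$ and $f_a$. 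Combining the pointwise isomorphisms with naturality yields the dual equivalence $(C,Y)$ between $\KHaus$ and $\ubal$.
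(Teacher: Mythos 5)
Your proposal is correct, but it does substantially more work than the paper does at this point: the paper does not actually prove this theorem. It treats Gelfand--Naimark--Stone duality as a known result, recalling only the statements of the two key facts --- that $\xi_X:X\to Y_{C^*(X)}$ is a homeomorphism iff $X$ is compact, and that $\zeta_A:A\to C(Y_A)$ is an isomorphism iff $A$ is uniformly complete --- and outsourcing their proofs to the cited literature (Gelfand--Naimark, Stone, Gillman--Jerison, Johnstone, and the authors' earlier paper on $\bal$). Your argument follows exactly the same architecture (restrict the contravariant adjunction $(C^*,Y)$ between $\creg$ and $\bal$ and characterize when the unit and counit are isomorphisms), but you supply the analytic content the paper omits: surjectivity of $\xi_X$ via the finite intersection property of the zero-sets $Z(f)$, $f\in M$, and surjectivity of $\zeta_A$ via the isometry $\|a\|=\|f_a\|_\infty$ together with the Kakutani--Krein lattice form of Stone--Weierstrass. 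Both of these are the standard proofs and are sound. Two small points you should make explicit if you write this out in full: the finite intersection property argument needs each $Z(f)$ with $f\in M$ to be \emph{nonempty}, which follows because a function with empty zero-set on a compact space is bounded away from $0$ and hence invertible in $C^*(X)$, contradicting properness of $M$; and the Stone--Weierstrass step uses that $\zeta_A[A]$ is a uniformly closed linear sublattice containing the constants and separating points, so that the Kakutani--Krein hypotheses are genuinely met. Neither is a gap in the idea, only in the level of detail.
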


\begin{remark}
While Stone worked with real-valued functions, Gelfand and Naimark worked with complex-valued functions.
As a result, Gelfand-Naimark duality is between $\KHaus$ and the category ${\sf CC}^*{\sf Alg}$ of commutative
$C^*$-algebras. However, $\ubal$ is equivalent to ${\sf CC}^*{\sf Alg}$, and the equivalence is established by the
following functors. The functor $\ubal\to{\sf CC}^*{\sf Alg}$ associates to each $A\in\ubal$ its complexification
$A\otimes_\mathbb R \mathbb C$; and the functor ${\sf CC}^*{\sf Alg}\to\ubal$ associates to each $A\in{\sf CC}^*{\sf Alg}$
its subalgebra of self-adjoint elements; for further details see, e.g., \cite[Sec.~7]{BMO13a}.
\end{remark}

We recall that a subset of a topological space $X$ is \emph{clopen} if it is closed and open, that $X$ is \emph{zero-dimensional}
if it has a basis of clopens, and that $X$ is \emph{extremally disconnected} if the closure of each open is clopen.
Zero-dimensional compact Hausdorff spaces are usually referred to as \emph{Stone spaces} because, by the celebrated Stone duality,
they provide the dual counterpart of boolean algebras.

Let $\sf Stone$ be the full subcategory of $\KHaus$ consisting of Stone spaces, and let $\sf ED$ be the full subcategory of
$\sf Stone$ consisting of extremally disconnected objects of $\KHaus$. Gelfand-Naimark-Stone duality yields interesting
restrictions to $\sf Stone$ and $\sf ED$.

We recall that a commutative ring $A$ is a \emph{clean ring} provided each element of $A$ is the sum of an idempotent and a unit.
Let $\cubal$ be the full subcategory of $\ubal$ consisting of clean rings. By \cite[Thm.~2.5]{Aza02}, a compact Hausdorff space $X$ is a
Stone space iff $C(X)$ is a clean ring. This together with Gelfand-Naimark-Stone duality yields:

\begin{corollary}
The categories $\sf Stone$ and $\cubal$ are dually equivalent, and the dual equivalence is established by restricting the functors
$C$ and $Y$.
\end{corollary}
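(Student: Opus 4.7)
The plan is to obtain the dual equivalence by restricting the Gelfand-Naimark-Stone functors $C$ and $Y$ to the stated subcategories, so the only real content is to verify that these restrictions land where they should. Since $\sf Stone$ is full in $\KHaus$ and $\cubal$ is full in $\ubal$, once the restricted functors are well defined, fullness, faithfulness, and the natural isomorphisms $\xi_X$ and $\zeta_A$ are inherited automatically from the ambient duality.

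First I would check that $C$ maps $\sf Stone$ into $\cubal$. If $X$ is a Stone space, then $X \in \KHaus$, so $C(X) \in \ubal$ by Gelfand-Naimark-Stone duality; and by the cited result \cite[Thm.~2.5]{Aza02}, $X$ being a Stone space forces $C(X)$ to be a clean ring, hence $C(X)\in\cubal$. Next I would check that $Y$ sends $\cubal$ into $\sf Stone$. Given $A \in \cubal$, we have $Y_A \in \KHaus$, and the natural map $\zeta_A : A \to C(Y_A)$ is an isomorphism of $\ell$-algebras since $A$ is uniformly complete. Thus $C(Y_A)$ is clean, and applying \cite[Thm.~2.5]{Aza02} in the reverse direction we conclude that $Y_A$ is zero-dimensional, i.e.\ $Y_A \in \sf Stone$.

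With both restrictions in place, the natural isomorphisms $\xi_X : X \to Y_{C(X)}$ for $X \in \sf Stone$ and $\zeta_A : A \to C(Y_A)$ for $A \in \cubal$ are simply the restrictions of those from the ambient duality, and continue to be natural isomorphisms. Hence $(C,Y)$ restricts to a dual equivalence between $\sf Stone$ and $\cubal$.

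There is no real obstacle here beyond invoking Azarpanah's characterization in both directions; the only subtlety is the reverse direction, where one must remember to transport cleanness from $A$ to $C(Y_A)$ along the isomorphism $\zeta_A$ before applying the characterization, which is why uniform completeness of $A$ is essential.
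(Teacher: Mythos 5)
Your proof is correct and follows the same route as the paper: both directions reduce to Azarpanah's characterization that a compact Hausdorff $X$ is a Stone space iff $C(X)$ is clean, combined with Gelfand-Naimark-Stone duality (the paper leaves the transport of cleanness along $\zeta_A$ implicit, which you spell out). No issues.
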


We recall that $A\in\bal$ is \emph{Dedekind complete} if each subset of $A$ bounded above has a least upper bound, and hence each
subset bounded below has a greatest lower bound. Let $\dbal$ be the full subcategory of $\bal$ consisting of Dedekind complete objects
in $\bal$. By \cite[Thm.~3.3]{BMO13b}, $A\in\bal$ is Dedekind complete iff $A\in\ubal$ and $A$ is a Baer ring, where we recall that
a commutative ring is a \emph{Baer ring} provided each annihilator ideal is generated by an idempotent. Consequently,
$\dbal$ is a full subcategory of $\cubal$. By the Stone-Nakano theorem \cite{Sto40,Sto49,Nak41}, for $X\in\KHaus$ we have $C(X)$
is Dedekind complete iff $X\in\sf ED$. This together with Gelfand-Naimark-Stone duality yields:

\begin{corollary} \label{cor:dbal}
The categories $\sf ED$ and $\dbal$ are dually equivalent, and the dual equivalence is established by restricting the functors
$C$ and $Y$.
\end{corollary}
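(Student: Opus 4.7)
The plan is to deduce this corollary directly from Gelfand-Naimark-Stone duality by restricting the functors $C$ and $Y$. Since $\sf ED$ is a full subcategory of $\KHaus$ and $\dbal$ is a full subcategory of $\ubal$, any restriction of $C$ and $Y$ to these subcategories is automatically fully faithful; what I must check is that $C$ carries objects of $\sf ED$ into $\dbal$, that $Y$ carries objects of $\dbal$ into $\sf ED$, and that the unit and counit of the ambient dual equivalence restrict appropriately.

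First I would observe that the containment $\dbal \subseteq \ubal$ used in the statement is already recorded in the preceding paragraph via \cite[Thm.~3.3]{BMO13b}, so every $A \in \dbal$ lies in the domain of the ambient dual equivalence and the isomorphism $\zeta_A : A \to C(Y_A)$ is available. Given $X \in \sf ED$, the Stone-Nakano theorem in the direction $X \in \sf ED \Rightarrow C(X)$ Dedekind complete shows $C(X) \in \dbal$, so $C$ restricts to a functor $\sf ED \to \dbal$. Conversely, given $A \in \dbal$, the isomorphism $A \cong C(Y_A)$ transports Dedekind completeness to $C(Y_A)$, and Stone-Nakano applied in the opposite direction yields $Y_A \in \sf ED$; thus $Y$ restricts to a functor $\dbal \to \sf ED$.

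With these restrictions in place, the natural isomorphisms $\zeta_A : A \to C(Y_A)$ and $\xi_X : X \to Y_{C(X)}$ from Gelfand-Naimark-Stone duality are already defined on $\dbal$ and $\sf ED$ respectively, and remain isomorphisms there since they are isomorphisms in the ambient categories and the subcategories are full. This delivers the dual equivalence. The ``main obstacle'' is not substantive: the corollary is essentially a bookkeeping exercise that combines the two cited black boxes (the characterization of Dedekind completeness in $\bal$ and the Stone-Nakano theorem) to identify the image of $\dbal$ under $Y$ with $\sf ED$, exactly in parallel with how the preceding corollary uses \cite[Thm.~2.5]{Aza02} to restrict the duality to $\sf Stone$ and $\cubal$.
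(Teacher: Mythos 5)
Your proposal is correct and follows exactly the route the paper takes: it cites \cite[Thm.~3.3]{BMO13b} to place $\dbal$ inside $\ubal$, uses the Stone--Nakano theorem in both directions to match objects of $\dbal$ with objects of $\sf ED$, and then invokes Gelfand--Naimark--Stone duality for the rest. The paper merely compresses the bookkeeping you spell out into the phrase ``this together with Gelfand--Naimark--Stone duality yields.''
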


Let $\sf BA$ be the category of boolean algebras and boolean homomorphisms, and let $\sf CBA$ be the full subcategory of $\sf BA$
consisting of complete boolean algebras. By Stone duality for boolean algebras, $\sf BA$ is dually equivalent to $\sf Stone$ and
$\sf CBA$ is dually equivalent to $\sf ED$. Thus, we arrive at the following diagram:

\[
\begin{tikzcd}[column sep = 4pc, row sep = 1pc]
\bal \arrow[r, shift left = .5ex, "Y"]  & \creg \arrow[l, shift left = .5ex, "C^*"] & \\
\ubal \arrow[r, shift left = .5ex, "Y"] \arrow[u, hookrightarrow] & \KHaus \arrow[l, shift left = .5ex, "C"] \arrow[u, hookrightarrow] & \\
\cubal \arrow[r, leftrightarrow] \arrow[u, hookrightarrow] & \sf{Stone} \arrow[r, leftrightarrow] \arrow[u, hookrightarrow] & \sf{BA} \\
\dbal \arrow[u, hookrightarrow] \arrow[r, leftrightarrow] & \sf{ED} \arrow[u, hookrightarrow] \arrow[r, leftrightarrow] & \sf{cBA}
\arrow[u, hookrightarrow] \\
\end{tikzcd}
\]

Our goal is to generalize Gelfand-Naimark-Stone duality to compactifications and completely regular spaces. To achieve this, we require
new concepts of basic algebras and basic extensions.

\section{Basic algebras and a ring-theoretic version of Tarski duality}

Tarski duality establishes a dual equivalence between the category $\Set$ of sets and functions and the category $\CABA$ of
complete and atomic boolean algebras and complete boolean homomorphisms. The functor $\wp : \Set \to \CABA$ sends
a set $X$ to its powerset $\wp(X)$, and a function $\varphi : X \to Y$ to the boolean homomorphism $\varphi^{-1} : \wp(Y) \to \wp(X)$.
The functor $\At : \CABA \to \Set$ sends $B \in \sf CABA$ to the set $\At(B)$ of atoms of $B$.
If $\sigma : B \to C$ is a complete boolean homomorphism, then $\sf At$ sends $\sigma$ to $\sigma_* : \At(C) \to \At(B)$,
defined by $\sigma_*(c) = \bigwedge \{ b \in B \mid c \le \sigma(b) \}$.

We next use Corollary~\ref{cor:dbal} to give a ring-theoretic version of Tarski duality.
For $A\in\bal$, let $\Id(A)$ be the boolean algebra of idempotents of $A$. We recall
that an idempotent $e$ of $A$ is \emph{primitive} if $e \ne 0$ and $0 \leq f \leq e$ for some $f \in \Id(A)$ implies $f = 0$ or $f = e$.
Thus, primitive idempotents are exactly the atoms of the boolean algebra $\Id(A)$. Let ${\prim}(A)$ be the set of primitive idempotents of $A$.

Let $A\in\dbal$. Then $\Id(A)$ is complete. By Corollary~\ref{cor:dbal}, $Y_A\in\sf ED$ and $\alpha:A\to C(Y_A)$ is an isomorphism.
Moreover, $\Id(A)$ is isomorphic to the boolean algebra ${\sf Clop}(Y_A)$ of clopen subsets of $Y_A$. Primitive idempotents then
correspond to isolated points of $Y_A$.

\begin{convention}
Let $X_A$ be the set of isolated points of $Y_A$.
\end{convention}

As follows from the next lemma, the correspondence between primitive idempotents of $A$ and isolated points of $Y_A$ is obtained
by associating with each primitive idempotent $e$ the maximal $\ell$-ideal $(1-e)A$.

\begin{lemma} \label{prim id}
Let $A \in \dbal$ and $0 \ne e \in \Id(A)$. The following are equivalent.
\begin{enumerate}
\item $e$ is a primitive idempotent of $A$.
\item $(1-e)A$ is a maximal $\ell$-ideal of $A$.
\item For each $a \in A$, there is $r \in {\mathbb{R}}$ such that $ae = re$.
\end{enumerate}
Consequently, $X_A = \{(1-e)A \mid e \in \prim(A)\}$.
\end{lemma}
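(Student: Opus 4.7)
My plan is to prove the equivalences in the cyclic order $(1)\Rightarrow(3)\Rightarrow(2)\Rightarrow(1)$, then deduce the final identification of $X_A$ from the explicit correspondence of $(2)$ together with Corollary~\ref{cor:dbal}. The central computational tool will be the Peirce decomposition $A = eA \oplus (1-e)A$ (valid because $e$ is a central idempotent in the commutative ring $A$) and the fact that, via Gelfand--Naimark--Stone duality applied inside $\dbal$, the boolean algebra $\Id(A)$ corresponds to $\mathsf{Clop}(Y_A)$.

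For $(1)\Rightarrow(3)$, I would observe that the corner $eA$ inherits from $A$ the structure of a Dedekind complete bounded archimedean $\ell$-algebra with multiplicative identity $e$, and that primitivity of $e$ in $A$ is exactly the assertion $\Id(eA)=\{0,e\}$. Applying Corollary~\ref{cor:dbal} to $eA$ yields $eA\cong C(Y_{eA})$ with $Y_{eA}\in\mathsf{ED}$, and the two-element clopen algebra forces $Y_{eA}$ to be a single point. Hence $eA\cong\mathbb R$, so every $ae\in eA$ equals $re$ for a unique $r\in\mathbb R$.

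For $(3)\Rightarrow(2)$, the ring homomorphism $\pi:A\to eA$, $a\mapsto ae$, has kernel $(1-e)A$ and, by $(3)$, image $\mathbb Re\cong\mathbb R$, so $A/(1-e)A\cong\mathbb R$. To upgrade this to maximality as an $\ell$-ideal I would verify that $(1-e)A$ is an $\ell$-ideal: if $b\in(1-e)A$ then $be=0$, and since $0\le e$ multiplication by $e$ is a lattice homomorphism, giving $|b|e=|be|=0$; then $|a|\le|b|$ yields $|a|e=0$, hence $ae=0$, hence $a=(1-e)a\in(1-e)A$. Because $A/(1-e)A\cong\mathbb R$ and maximal $\ell$-ideals of objects in $\bal$ are precisely the $\ell$-ideals with quotient $\mathbb R$, we conclude $(2)$. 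For $(2)\Rightarrow(1)$, if $f\in\Id(A)$ with $0\le f\le e$, then $f=f e$, and reducing mod $(1-e)A$ sends $f$ to an idempotent of $A/(1-e)A\cong\mathbb R$, so $f\equiv 0$ or $f\equiv e$; hence $f-re\in(1-e)A$ with $r\in\{0,1\}$, and multiplying by $e$ gives $f=re\in\{0,e\}$.

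For the final assertion, since $X_A$ is the set of isolated points of $Y_A$ and $\Id(A)\cong\mathsf{Clop}(Y_A)$ sends atoms to atoms, the primitive idempotents of $A$ correspond bijectively to singleton clopens of $Y_A$, i.e.\ to isolated points. Under Gelfand--Naimark--Stone duality each isolated point $x$ is the maximal $\ell$-ideal $\{f\in C(Y_A)\mid f(x)=0\}$, and a direct check shows this equals $(1-e)C(Y_A)$ for $e=\chi_{\{x\}}$; pulling back through the isomorphism $A\cong C(Y_A)$ gives the stated description of $X_A$.

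The only delicate point I anticipate is the argument that $(1-e)A$ is genuinely an $\ell$-ideal in $(3)\Rightarrow(2)$, which hinges on the identity $|b|e=|be|$ for idempotent $e\ge 0$; everything else is a straightforward combination of the Peirce decomposition with the already-established dualities.
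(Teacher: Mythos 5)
Your proof is correct, and in one place it takes a genuinely different route from the paper's. The paper proves the cycle $(1)\Rightarrow(2)\Rightarrow(3)\Rightarrow(1)$ and outsources the implication out of $(1)$ entirely to \cite[Lem.~4.1]{BMO18c}; you instead prove $(1)\Rightarrow(3)$ directly by passing to the corner ring $eA$, observing that it is again an object of $\dbal$ with identity $e$ and with $\Id(eA)=\{0,e\}$, and then invoking Corollary~\ref{cor:dbal} to get $eA\cong C(Y_{eA})\cong\mathbb{R}$ because a connected extremally disconnected (hence zero-dimensional) compact Hausdorff space is a point. This buys a self-contained argument at the cost of the routine but nontrivial verifications that $eA$ inherits boundedness, archimedeanness and Dedekind completeness from $A$. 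Your $(3)\Rightarrow(2)$ also does more than the paper's, which never explicitly checks that $(1-e)A$ is an $\ell$-ideal (that is buried in the citation); your check via $|b|e=|be|$ is legitimate because $A$ embeds in $C(Y_A)$ under the Yosida representation, where multiplication by a nonnegative element distributes over $\vee$ and $\wedge$. The implication $(2)\Rightarrow(1)$ and the identification $X_A=\{(1-e)A\mid e\in\prim(A)\}$ via $\Id(A)\cong{\sf Clop}(Y_A)$ match the paper's argument essentially step for step, with your reduction modulo $(1-e)A$ playing the role of the paper's scalar computation $f=ef=re$.
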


\begin{proof}
(1) $\Rightarrow$ (2). This is proved in \cite[Lem.~4.1]{BMO18c}.

(2) $\Rightarrow$ (3). Since $(1-e)A$ is a maximal $\ell$-ideal, $A/(1-e)A \simeq \mathbb{R}$. Therefore, $a + (1-e)A = r + (1-e)A$
for some $r \in \mathbb{R}$. So $a - r \in (1-e)A$. Since $1-e$ is an idempotent, this yields that $(a - r)(1-e) = (a - r)$.
Thus, $(a - r)e = 0$, and hence $ae = re$.

(3) $\Rightarrow$ (1). Let $f \in \Id(A)$ and $f \leq e$. There is $r \in {\mathbb{R}}$ such that $fe = re$. Since $f \leq e$ and
$e,f \in \Id(A)$,  we have $f = ef = re$. Therefore, $r = 0$ or $r = 1$. Consequently, $f = 0$ or $f = e$, proving
that $e$ is primitive.

To prove the last statement of the lemma, since $A\in\dbal$, we have $A \cong C(Y_A)$. Therefore, idempotents of $A$
correspond to characteristic functions of clopens of $Y_A$. So if $e\in\Id(A)$, then the sets $Z_\ell(e)$
and $Z_\ell(1-e)$ are complementary clopens. Now suppose $e$ is a primitive idempotent. By the equivalence
of (1) and (2), $(1-e)A$ is a maximal $\ell$-ideal, hence $Z_\ell(1-e)$ is a singleton, whose
complement is $Z_\ell(e)$. This yields that $(1-e)A$ is an isolated point of $Y_A$. Conversely, if $N$ is
an isolated point of $Y_A$, then $Y_A\setminus\{N\}$ is clopen. Since ${\sf Clop}(Y_A)$ is isomorphic to $\Id(A)$,
there is $e \in \Id(A)$ such that
$Y_A\setminus\{N\} = Z_\ell(e)$. But then $\{N\}= Z_\ell(1-e) = \{(1-e)A\}$, so
$e$ is a primitive idempotent by the equivalence of (1) and (2). Thus, $X_A = \{(1-e)A \mid e \in \prim(A)\}$.
\end{proof}

Let $A \in \bal$. It follows from the proof of \cite[Thm.~4.3]{BMO18c} that $A \cong B(X)$ for some set $X$ iff $A \in \dbal$
and $\Id(A)$ is atomic. For the reader's convenience we give a proof of this in Proposition~\ref{basic char}, along with another
equivalent condition that $A \cong B(X_A)$. For this we require the following definition.

\begin{definition}
For $A\in\bal$, define $\vartheta_A:A \rightarrow B(X_A)$ as the composition $\vartheta_A=\kappa_A\circ\zeta_A$
where
$\zeta_A : A \rightarrow C(Y_A)$ is the Yosida representation and $\kappa_A: C(Y_A) \rightarrow B(X_A)$ sends $f \in C(Y_A)$
to its restriction to $X_A$. Since both $\zeta_A$ and $\kappa_A$ are morphisms in $\bal$, so is $\vartheta_A$.
\end{definition}

For a set $X$ and $x \in X$ let
\[
N_x = \{ f \in B(X) \mid f(x) = 0\}.
\]
Note that if $X \in \creg$, then $M_x = N_x \cap C^*(X)$.

\begin{proposition} \label{basic char}
The following are equivalent for $A \in \bal$.
\begin{enumerate}
\item $A \in \dbal$ and $\Id(A)$ is atomic.
\item There is a set $X$ such that $A \cong B(X)$.
\item $\vartheta_A:A \rightarrow B(X_A)$ is an isomorphism.
\end{enumerate}
\end{proposition}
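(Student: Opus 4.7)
The plan is to prove the cycle $(3) \Rightarrow (2) \Rightarrow (1) \Rightarrow (3)$. The implication $(3) \Rightarrow (2)$ is immediate on choosing $X = X_A$. For $(2) \Rightarrow (1)$, note that for any set $X$ the algebra $B(X)$ is Dedekind complete because pointwise suprema of norm-bounded families of bounded real-valued functions are again bounded; and the idempotents of $B(X)$ are precisely the characteristic functions of subsets of $X$, so $\Id(B(X))$ is isomorphic to the powerset $\wp(X)$, which is atomic with singletons as atoms.

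The main work is in $(1) \Rightarrow (3)$. Since $A \in \dbal$, Corollary \ref{cor:dbal} gives that $Y_A \in {\sf ED}$ and that the Yosida representation $\zeta_A : A \to C(Y_A)$ is an isomorphism, so it suffices to prove that the restriction map $\kappa_A : C(Y_A) \to B(X_A)$ is an isomorphism. The isomorphism $\zeta_A$ identifies $\Id(A)$ with ${\sf Clop}(Y_A)$, and by Lemma \ref{prim id} the atoms of ${\sf Clop}(Y_A)$ are exactly the singletons of isolated points. The hypothesis that $\Id(A)$ is atomic thus translates into the statement that every nonempty clopen of $Y_A$ meets $X_A$, and since $Y_A$ is zero-dimensional this means $X_A$ is dense in $Y_A$. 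Density yields both injectivity of $\kappa_A$ (continuous functions on $Y_A$ agreeing on the dense set $X_A$ must be equal) and the fact that $\kappa_A$ is an isometry with respect to the uniform norms.

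The remaining task is surjectivity of $\kappa_A$, which I would handle in two steps. First, using extremal disconnectedness of $Y_A$ together with the fact that each singleton $\{x\}$ for $x \in X_A$ is open in $Y_A$, I would show that every $S \subseteq X_A$ satisfies $\overline{S} \cap X_A = S$ and that $\overline{S}$ is clopen (being the closure of an open set in an extremally disconnected space). This gives a boolean isomorphism ${\sf Clop}(Y_A) \to \wp(X_A)$, so $\vartheta_A$ carries $\Id(A)$ onto $\Id(B(X_A))$; by $\mathbb{R}$-linearity its image then contains every finite $\mathbb{R}$-linear combination of characteristic functions, i.e., every bounded function on $X_A$ with finite range. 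Second, every $g \in B(X_A)$ is a uniform limit of such step functions (partition the bounded range of $g$ into small intervals), and since $A \in \dbal$ implies $A \in \ubal$ by \cite[Thm.~3.3]{BMO13b} and $\kappa_A$ is isometric, the image $\vartheta_A(A)$ is uniformly closed in $B(X_A)$ and contains a uniformly dense subset, hence equals $B(X_A)$.

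I expect the main obstacle to be this surjectivity step, particularly the clopen-extension argument that turns arbitrary subsets of $X_A$ into clopens of $Y_A$, together with the correct orchestration of atomicity, extremal disconnectedness, and uniform completeness to pass from step functions to arbitrary bounded functions.
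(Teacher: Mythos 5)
Your proof is correct, and the overall skeleton matches the paper's: reduce via Corollary~\ref{cor:dbal} to showing the restriction map $\kappa_A : C(Y_A) \to B(X_A)$ is an isomorphism, and use atomicity of $\Id(A)$ to see that $X_A$ is dense in $Y_A$ (the paper proves the cycle $(3)\Rightarrow(1)\Rightarrow(2)\Rightarrow(3)$ rather than your $(3)\Rightarrow(2)\Rightarrow(1)\Rightarrow(3)$, but that is immaterial). The genuine difference is in how surjectivity of $\kappa_A$ is obtained. The paper invokes the standard fact (citing \cite[p.~96]{GJ60}) that a compact extremally disconnected space with a dense set of isolated points is the Stone-\v{C}ech compactification of that set viewed as a discrete space, whence $C(Y_A) \cong C(\beta(X_A)) \cong C^*(X_A) = B(X_A)$. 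You instead argue directly: the correspondence $S \mapsto \overline{S}$ (clopen by extremal disconnectedness, with $\overline{S}\cap X_A = S$ because the points of $X_A$ are isolated) shows the image of $\vartheta_A$ contains every characteristic function, hence every step function; step functions are uniformly dense in $B(X_A)$; and the image is uniformly closed because $\kappa_A$ is an isometry on the dense set $X_A$ and $A$ is uniformly complete (which, as you note, follows from $A\in\dbal$, or simply from $A\cong C(Y_A)$). Your route is more self-contained --- it replaces the external topological citation with an explicit Stone--Weierstrass-style density argument --- at the cost of a slightly longer verification; the paper's route is shorter but leans on the characterization of $\beta D$ for discrete $D$. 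The step you flagged as the likely obstacle (turning subsets of $X_A$ into clopens of $Y_A$) goes through exactly as you describe, so there is no gap.
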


\begin{proof}
(3) $\Rightarrow$ (1). We have $B(X_A)\in\bal$ and infinite joins and meets of bounded subsets of $B(X_A)$ are pointwise,
hence exist in $B(X_A)$. Therefore, $B(X_A)$ is Dedekind complete, hence $B(X_A) \in \dbal$. In addition, idempotents of
$B(X_A)$ are exactly the characteristic functions of subsets of $X_A$, and primitive idempotents the characteristic functions
of singletons. Thus, the boolean algebra of idempotents of $B(X_A)$ is isomorphic to the powerset $\wp(X_A)$. It follows that
$\Id(B(X_A))$ is atomic.

(1) $\Rightarrow$ (2). Since $A \in \dbal$, $Y_A$ is extremally disconnected and $A \cong C(Y_A)$ by Corollary~\ref{cor:dbal}.
So since $\Id(A)$ is atomic, $X_A$ is dense in $Y_A$. Therefore, $Y_A$ is homeomorphic to the Stone-\v{C}ech compactification
$\beta(X_A)$ of the discrete space $X_A$ (see, e.g., \cite[p.~96]{GJ60}). Thus, $C(Y_A)\cong C(\beta(X_A))\cong C^*(X_A)$.
Since $X_A$ is discrete, $C^*(X_A)=B(X_A)$, yielding that $A \cong B(X_A)$.

(2) $\Rightarrow$ (3). We may assume that
$A = B(X)$. Then the primitive idempotents of $A$ are characteristic functions of points of $X$. We have
$N_x = (1-\chi_{\{x\}})A$, so $X_A = \{ N_x \mid x \in X\}$
by Lemma~\ref{prim id}. Let $f \in A$. Then $\vartheta_A(f)(N_x) = r$ iff $f-r \in N_x$ iff $f(x) = r$.
From this it follows that $\vartheta_A$ is 1-1. To see that $\vartheta_A$ is onto, since $A$ is Dedekind
complete, $\zeta_A : A \rightarrow C(Y_A)$ is an isomorphism. In addition, since $\Id(A)$ is atomic, as
we already pointed out, $Y_A$ is homeomorphic to the Stone-\v{C}ech
compactification of $X_A$. Therefore, $\kappa_A:C(Y_A) \rightarrow B(X_A)$ is onto. Thus, $\vartheta_A$
is onto, hence an isomorphism.
\end{proof}

This proposition motivates the following definition.

\begin{definition} \label{def: basic}
We call $A \in \bal$ a \emph{basic algebra} if $A$ is Dedekind complete and $\Id(A)$ is atomic.
\end{definition}

\begin{remark}
It is shown in \cite[Thm.~4.3]{BMO18c} that for $A\in\dbal$, the conditions of Proposition~\ref{basic char} are also equivalent to
$A$ having essential socle.
\end{remark}

We recall that a unital $\ell$-algebra homomorphism $\alpha:A\to B$ between $A,B\in\dbal$ is \emph{normal} if it preserves all
existing joins (and hence all existing meets). Let $\balg$ be the category of basic algebras and normal $\ell$-algebra homomorphisms.

\begin{convention}
For a map $\varphi:X\to Y$, define $\varphi^+ : B(Y)\to B(X)$ by $\varphi^+(f)=f\circ\varphi$.
\end{convention}

\begin{remark}
If $X,Y\in\creg$ and $\varphi:X\to Y$ is continuous, then $\varphi^* : C^*(Y)\to C^*(X)$ is the restriction of $\varphi^+:B(Y)\to B(X)$.
\end{remark}

It is easy to see that $\varphi^+$ is a normal $\ell$-algebra homomorphism. Thus, we have a contravariant functor $B:\Set\to\balg$
which associates with each set $X$ the basic algebra $B(X)$, and with each map $\varphi:X\to Y$ the normal $\ell$-algebra homomorphism
$\varphi^+ : B(Y)\to B(X)$.

The contravariant functor $X:\balg\to\Set$ is defined as follows. With each basic algebra $A$ we associate
the set $X_A$ of isolated points of $Y_A$. To define the action of the functor on morphisms, we recall that a
continuous map $\varphi:X\to Y$ is \emph{skeletal} if $F$ nowhere dense in $Y$ implies that $\varphi^{-1}(F)$
is nowhere dense in $X$, and that $\varphi$ is \emph{quasi-open} if $U$ nonempty open in $X$ implies that the
interior of $\varphi(U)$ is nonempty in $Y$. It is well known that the two concepts of skeletal and quasi-open
maps coincide in $\KHaus$.
Let $\alpha:A\to B$ be a normal $\ell$-algebra homomorphism. By \cite[Thm.~7.6]{BMO16}, $\alpha_*:Y_B\to Y_A$ is skeletal.
Therefore, it is quasi-open, and hence $\alpha_*$ sends isolated points of $Y_B$ to isolated points of $Y_A$.
Thus, the restriction of $\alpha_*$ to $X_B$ is a well-defined map from $X_B$ to $X_A$.

\begin{convention}
Let $\alpha_+ : X_B \to X_A$ be the restriction of $\alpha_* : Y_B \to Y_A$.
\end{convention}

Consequently, we have a contravariant functor $X:\balg\to\Set$ which associates with each $A\in\balg$ the set $X_A$ and with
each normal $\ell$-algebra homomorphism $\alpha:A\to B$ the map $\alpha_+:X_B\to X_A$.

\begin{theorem} \label{thm:tarski}
The categories $\sf Set$ and $\balg$ are dually equivalent, and the dual equivalence is established by the functors $B$ and $X$.
\end{theorem}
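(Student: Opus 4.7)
The plan is to establish the dual equivalence by exhibiting natural isomorphisms $\eta:\Id_\Set\Rightarrow X\circ B$ and $\vartheta:\Id_\balg\Rightarrow B\circ X$, the latter already constructed in Proposition~\ref{basic char}. Since the functoriality of $B$ and $X$ is already in place (the nontrivial part being that normal $\ell$-algebra homomorphisms send isolated points to isolated points, via skeletality), only naturality and isomorphism of the two candidate units need verification.

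First I would define $\eta_X:X\to X_{B(X)}$ by $\eta_X(x)=N_x$ and check that it is a bijection. Joins and meets of bounded subsets of $B(X)$ are computed pointwise, so $B(X)\in\dbal$, and Lemma~\ref{prim id} gives $X_{B(X)}=\{(1-e)B(X)\mid e\in\prim(B(X))\}$. The idempotents of $B(X)$ are the characteristic functions $\chi_S$ with $S\subseteq X$, and the primitive ones are precisely the $\chi_{\{x\}}$; since $(1-\chi_{\{x\}})B(X)=N_x$, the map $\eta_X$ is indeed a bijection.

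Next I would verify naturality of $\eta$ and $\vartheta$. For $\varphi:X\to Y$ and $x\in X$, one computes $(\varphi^+)_+(N_x)=(\varphi^+)^{-1}(N_x)=\{f\in B(Y)\mid f(\varphi(x))=0\}=N_{\varphi(x)}$, so $\eta_Y\circ\varphi=(\varphi^+)_+\circ\eta_X$. For a normal $\ell$-algebra homomorphism $\alpha:A\to B$ in $\balg$, given $a\in A$ and $N\in X_B$, both $(\vartheta_B\circ\alpha)(a)(N)$ and $((\alpha_+)^+\circ\vartheta_A)(a)(N)$ equal the unique $r\in\mathbb R$ with $\alpha(a)-r\in N$: one uses the identifications $B/N\cong\mathbb R$ and $A/\alpha^{-1}(N)\cong\mathbb R$ together with the fact that the induced $\ell$-algebra map between these quotients must be the identity on $\mathbb R$.

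By Proposition~\ref{basic char}, $\vartheta_A$ is an $\ell$-algebra isomorphism for every $A\in\balg$; since any $\ell$-algebra isomorphism between Dedekind complete objects of $\bal$ preserves all existing joins and meets, $\vartheta_A$ is in fact an isomorphism in $\balg$. Combined with the bijectivity of $\eta_X$, this yields the desired dual equivalence. The main conceptual obstacle was already settled in Proposition~\ref{basic char} (producing the isomorphism $\vartheta_A$) and in the preceding setup (to ensure $\alpha_+$ is well-defined, via the skeletal-equals-quasi-open fact in $\KHaus$ applied to $\alpha_*$); what remains above is essentially a diagram chase.
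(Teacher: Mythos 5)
Your proof is correct and follows essentially the same route as the paper: the same unit $\eta_X(x)=N_x$, the same counit $\vartheta$, and the same key inputs (Proposition~\ref{basic char}, Lemma~\ref{prim id}, and the skeletal/quasi-open fact guaranteeing $\alpha_+$ is well defined). The only cosmetic difference is that you verify naturality of $\vartheta$ by a direct pointwise computation with the quotients $B/N\cong\mathbb{R}$ and $A/\alpha^{-1}(N)\cong\mathbb{R}$, whereas the paper factors the square through the Yosida representations and invokes Gelfand--Naimark--Stone duality for the left half; both arguments are sound.
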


\begin{proof}
We define a natural transformation $\eta$ from the identity functor on ${\sf Set}$ to $XB$ by $\eta_X(x) = N_x$.
Given a function $\varphi: X \to X'$, we have the following diagram.
\[
\begin{tikzcd}[column sep=5pc]
X \arrow[r, "\eta_X"] \arrow[d, "\varphi"'] & X_{B(X)} \arrow[d, "(\varphi^+)_+"] \\
X' \arrow[r, "\eta_{X'}"'] & X_{B(X')}
\end{tikzcd}
\]
To see that the diagram is commutative, let $x \in X$. Then $\eta_{X'}(\varphi(x)) = N_{\varphi(x)}$. Also,
\begin{align*}
(\varphi^+)_+(\eta_X(x)) &= (\varphi^+)_+(N_x) = (\varphi^+)^{-1}(N_x) = \{ f \in B(X') \mid \varphi^+(f) \in N_x\} \\
&= \{ f \in B(X') \mid f(\varphi(x)) = 0\} = N_{\varphi(x)}.
\end{align*}
Thus, $\eta_{X'} \circ \varphi = (\varphi^+)_+ \circ \eta_X$, and hence
$\eta$ is a natural transformation. Since $\eta_X$ is a bijection,
we conclude that $\eta$ is a natural isomorphism.

We show that $\vartheta$ is a natural transformation from the identity functor on $\balg$ to $BX$.
Given a normal homomorphism $\alpha:A\to A'$, we have the following diagram.
\[
\begin{tikzcd}[column sep=5pc]
A \arrow[r, "\vartheta_A"] \arrow[d, "\alpha"'] & B(X_A) \arrow[d, "(\alpha_+)^+"] \\
A' \arrow[r, "\vartheta_{A'}"'] & B(X_{A'})
\end{tikzcd}
\]
The diagram factors into the larger diagram.
\[
\begin{tikzcd}[column sep=5pc]
A \arrow[r, "\zeta_A"] \arrow[d, "\alpha"'] \arrow[rr, bend left = 20, "\vartheta_A"] & C(Y_A) \arrow[d, "(\alpha_*)^*"]
\arrow[r, "\kappa_A"] & B(X_A) \arrow[d, "(\alpha_+)^+"] \\
A' \arrow[r, "\zeta_{A'}"'] \arrow[rr, bend right = 20, "\vartheta_{A'}"'] & C(Y_{A'}) \arrow[r, "\kappa_{A'}"'] & B(X_{A'})
\end{tikzcd}
\]
We have $(\alpha_*)^*\circ\zeta_A = \zeta_{A'} \circ \alpha$ by Gelfand-Naimark-Stone duality, so the left square commutes. To see that the right square commutes, let $f \in C(Y_A)$. Then
\[
(\alpha_+)^+(\kappa_A(f)) = (\alpha_+)^+(f|_{X_A}) = (f|_{X_A}) \circ \alpha_+.
\]
On the other hand,
\[
(\kappa_{A'} \circ (\alpha_*)^*)(f) = \kappa_{A'}(f\circ \alpha_*) = (f\circ \alpha_*)|_{X_{A'}}.
\]
For $M \in X_{A'}$, we have
\[
(f|_{X_A} \circ \alpha_+)(M) = f(\alpha_*(M)) = (f \circ \alpha_*)|_{X_A}(M),
\]
so $f|_{X_A} \circ \alpha_+ = (f \circ \alpha_*)|_{X_{A'}}$, and hence the right square commutes. Therefore, $(\alpha_+)^+ \circ \vartheta_A = \vartheta_{A'} \circ \alpha$, and so $\vartheta$ is a natural transformation. It is a natural isomorphism by
Proposition~\ref{basic char}. Thus, $B$ and $X$ yield a dual equivalence of $\sf Set$ and $\balg$.
\end{proof}

\begin{remark}
The duality of Theorem~\ref{thm:tarski} relates to Tarski duality as follows. Define a covariant functor $\Id : \balg \to \CABA$ by sending each $A \in \balg$ to the boolean algebra $\Id(A)$ and a normal homomorphism $\alpha : A \to B$ to its restriction to $\Id(A)$. It is easy to see that $\Id$ is well defined, and we have the following diagram.
\[
\begin{tikzcd}[column sep=5pc]
& \Set  \arrow[rd, "{\wp}", shift left=0.5ex] \arrow[ld, "B", shift left=0.5ex] & \\
\balg \arrow[rr, "{\Id}"'] \arrow[ru, "X", shift left=0.5ex] && \CABA \arrow[lu, "\At", shift left=0.5ex]
\end{tikzcd}
\]
The functor $\wp : \Set \to \CABA$ is the composition $\Id\circ B$,
and the composition $B\circ\At : \CABA \to \balg$ takes $C \in \CABA$ and sends it to $B(X)$, where $X$ is the set of isolated points
of the Stone space of $C$, so $B \circ \At \cong \Id$.
\end{remark}

\section{Compactifications and basic extensions} \label{sec: compactifications}

We recall (see, e.g., \cite[Sec.~3.5]{Eng89}) that a \emph{compactification} of a completely regular space $X$ is a pair $(Y,e)$,
where $Y$ is a compact Hausdorff space and $e : X\to Y$ is a topological embedding such that the image $e[X]$ is dense in $Y$.
Suppose that $e : X \to Y$ and $e' : X \to Y'$ are compactifications. As usual, we write $e \le e'$ provided there is a continuous map
$f : Y' \to Y$ with $f \circ e' = e$.
\[
\begin{tikzcd}
X \arrow[r, "e'"] \arrow[rd, "e"'] & Y' \arrow[d, "f"] \\
& Y
\end{tikzcd}
\]
The relation $\le$ is reflexive and transitive. Two compactifications $e$ and $e'$ are said to be \emph{equivalent} if $e \le e'$ and $e' \le e$.
It is well known that $e$ and $e'$ are equivalent iff there is a homeomorphism $f : Y'\to Y$ with $f \circ e' = e$. The equivalence classes of
compactifications of $X$ form a poset whose largest element is the Stone-\v{C}ech compactification $s : X \to \beta X$. There are many
constructions of $\beta X$.

\begin{convention} \label{con: beta}
We will follow Stone \cite{Sto37b} in viewing $\beta X$ as the maximal ideals of $C^*(X)$. Since maximal ideals of $C^*(X)$ are the same
as maximal $\ell$-ideals (see \cite[Lem.~1.1]{HIJ61} or \cite[Prop.~4.2]{BMO13a}), throughout this paper we identify $\beta X$ with
$Y_{C^*(X)}$, and hence the embedding $s : X \to \beta X$ sends $x$ to $M_x = \{ f\in C^*(X) \mid f(x) = 0\}$.
\end{convention}

In the classical setting, one considers compactifications of a fixed base space $X$. The following category of compactifications, without a fixed base space, was studied in \cite{BMO19a}.

\begin{definition}
Let $\C$ be the category whose objects are compactifications $e:X\to Y$ and whose morphisms are pairs $(f,g)$ of continuous maps such that
the following diagram commutes.
\[
\begin{tikzcd}[column sep=5pc]
X \arrow[r, "e"] \arrow[d, "f"'] & Y \arrow[d, "g"] \\
X' \arrow[r, "e'"'] & Y'
\end{tikzcd}
\]
The composition of two morphisms $(f_1,g_1)$ and $(f_2,g_2)$ is defined to be $(f_2\circ f_1, g_2\circ g_1)$.
\[
\begin{tikzcd}[column sep=5pc]
X_1 \arrow[r, "e_1"] \arrow[dd, bend right = 35, "f_2 \circ f_1"'] \arrow[d, "f_1"] & Y_1 \arrow[d, "g_1"']
\arrow[dd, bend left = 35, "g_2 \circ g_1"] \\
X_2 \arrow[r, "e_2"] \arrow[d, "f_2"] & Y_2 \arrow[d, "g_2"'] \\
X_3 \arrow[r, "e_3"'] & Y_3
\end{tikzcd}
\]
\end{definition}

It is straightforward to see that a morphism $(f,g)$ in $\C$ is an isomorphism iff both $f$ and $g$ are homeomorphisms.

\begin{convention}
For a compactification $e : X \to Y$ let $e^\flat : C(Y) \to B(X)$ be given by $e^\flat(f) = f\circ e$.
\end{convention}

\begin{remark}
We have that $e^\flat=\iota\circ e^*$ where $\iota:C^*(X)\to B(X)$ is the inclusion map.
\end{remark}

\begin{proposition} \label{functor 2}
If $e : X \to Y$ is a compactification,
then $e^\flat : C(Y) \to B(X)$ is a monomorphism in $\bal$, and each element of $B(X)$ is a join of meets of elements from $e^\flat[C(Y)]$.
\end{proposition}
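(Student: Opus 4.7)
The proposition asserts two claims about $e^\flat:C(Y)\to B(X)$: it is a monomorphism in $\bal$, and every element of $B(X)$ is a join of meets of elements of $e^\flat[C(Y)]$. I plan to handle these in order, relying on density of $e[X]$ in $Y$ for the first and on Urysohn's lemma applied to the compact Hausdorff space $Y$ for the second.

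For the monomorphism claim, $e^\flat$ is a unital $\ell$-algebra homomorphism because the algebraic and lattice operations in $C(Y)$ and $B(X)$ are pointwise and therefore commute with precomposition by $e$; also $e^\flat$ sends bounded continuous functions on $Y$ to bounded functions on $X$, so the target is genuinely $B(X)$. For injectivity, if $f\in C(Y)$ satisfies $f\circ e=0$, then $f$ vanishes on the dense set $e[X]$, and by continuity $f=0$. Since $\bal$ is a concrete category of algebras, an injective homomorphism is a monomorphism.

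For the join-of-meets representation, fix $g\in B(X)$ and choose $N\in\mathbb{N}$ with $|g|\le N$. For each $x\in X$ define $h_x\in B(X)$ by $h_x(x)=g(x)$ and $h_x(z)=-N$ for $z\ne x$. Since $-N\le g(z)$ everywhere and $h_x(z)\le g(z)$ with equality at $z=x$, a pointwise computation gives $g=\bigvee_{x\in X}h_x$ in $B(X)$; the family is uniformly bounded by $N$ and $B(X)$ is Dedekind complete, so this join exists. For each ordered pair $x\ne y$ in $X$, the points $e(x)$ and $e(y)$ are distinct in the compact Hausdorff space $Y$, so by Urysohn's lemma there is $f_{x,y}\in C(Y)$ with $f_{x,y}(e(x))=g(x)$, $f_{x,y}(e(y))=-N$, and $-N\le f_{x,y}\le g(x)$ pointwise on $Y$. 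Then $\bigwedge_{y\ne x}(f_{x,y}\circ e)=h_x$ in $B(X)$: at $z=x$ every factor equals $g(x)$, while at $z\ne x$ the factor with $y=z$ equals $-N$ and all factors are bounded below by $-N$. Combining, $g=\bigvee_{x\in X}\bigwedge_{y\ne x}(f_{x,y}\circ e)$, exhibiting $g$ as a join of meets of elements of $e^\flat[C(Y)]$.

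The main obstacle is the bookkeeping for the two-level sup-inf expression. One must arrange the approximating continuous functions on $Y$ so that the outer join reconstructs each value $g(x)$ (forced by $f_{x,y}(e(x))=g(x)$) while the inner meet collapses off-diagonal values to the common lower bound $-N$ (forced by $f_{x,y}(e(y))=-N$), all under a uniform bound that guarantees existence of every join and meet inside the Dedekind complete algebra $B(X)$. Urysohn's lemma on the compact Hausdorff space $Y$ supplies exactly the flexibility to prescribe both values simultaneously while keeping the function inside $[-N,g(x)]$.
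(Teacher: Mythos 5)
Your argument is correct, and at the top level it follows the same strategy as the paper's proof --- injectivity of $e^\flat$ from density of $e[X]$, then writing an arbitrary $g\in B(X)$ as a join over the points of $X$ of ``spike'' functions, each spike being a meet of pullbacks of continuous functions on $Y$ that separate one point of $e[X]$ from the rest --- but your execution of the second half is genuinely different and more elementary. The paper first proves that each primitive idempotent $\chi_{\{x\}}$ is the meet of \emph{all} $e^\flat(g)$ with $0\le g$ and $g(e(x))=1$ (by a contradiction argument from complete regularity of $Y$), then invokes Lemma~\ref{prim id} to write a positive $c$ as $\bigvee_b r_b b$ over primitive idempotents $b$, using $1=\bigvee\prim(B(X))$ and distributivity of multiplication by $c$ over that join, and finally shifts by a scalar to reach arbitrary elements. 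You instead fold the scalar shift into the construction from the start (your $h_x$ is $(g(x)+N)\chi_{\{x\}}-N$), realize each $h_x$ as the meet of an explicitly indexed family $\{e^\flat(f_{x,y})\}_{y\ne x}$ obtained from Urysohn's lemma on the normal space $Y$, and verify both the inner meet and the outer join by purely pointwise computations. This bypasses Lemma~\ref{prim id} and the idempotent machinery altogether; what it gives up is the reusable intermediate statement that primitive idempotents are meets from $e^\flat[C(Y)]$, which the paper recycles in Remark~\ref{rem: jm-dense}(1). Two small points to tighten: you should say explicitly that joins and meets of uniformly bounded families in $B(X)$ are computed pointwise (you use this in both directions; it is recorded in the proof of Proposition~\ref{basic char}), and your inner meet $\bigwedge_{y\ne x}$ is empty when $X$ is a singleton, a degenerate case you should dispose of separately, e.g.\ by noting that $h_x$ is then the pullback of a constant function and hence a one-element meet.
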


\begin{proof}
That the $\ell$-algebra homomorphism $e^\flat : C(Y) \rightarrow B(X)$ is 1-1 follows from the fact that the image of $e$ is dense in $Y$.
Next we show that every primitive idempotent in $B(X)$ is a meet of elements from $e^\flat[C(Y)]$. Let $b \in {\prim}(B(X))$. Then $b$ is
the characteristic function of a singleton set, so $b = \chi_{\{x\}}$ for some $x \in X$. Let
\[
S = \{e^\flat(g) \mid 0 \leq g \in C(Y) {\mbox{ and }} g(e(x)) = 1\}.
\]
We claim $\chi_{\{x\}} = \bigwedge S$. It is clear that $\chi_{\{x\}} \leq \bigwedge S$. Suppose by way of contradiction that
$\chi_{\{x\}} \ne \bigwedge S$. Then there exist $x' \in X$ and $\varepsilon > 0$  such that $x \ne x'$ and $g(e(x')) > \varepsilon$
for all $0 \leq g \in C(Y)$ with $g(e(x)) = 1$.  Since $e$ is 1-1, we have $e(x) \ne e(x')$, and since $Y$ is completely regular,
there exists $0\le g \in C(Y)$ with $g(e(x)) =1 $ and $g(e(x'))  = 0$. This contradiction shows that $\chi_{\{x\}} = \bigwedge S$.
Therefore, every primitive idempotent in $B(X)$ is a meet of elements from $e^\flat[C(Y)]$.

Let $0 \leq c \in B(X)$. By Lemma~\ref{prim id}, for each $b \in {\prim}(B(X))$ there is $r_b \in {\mathbb{R}}$ such that $cb = r_b b$.
Since $\Id(B(X))$ is atomic, we have $1 = \bigvee \prim(B(X))$, and so since  $c \ge 0$,
\begin{align*}
c &= c \cdot 1 = c \cdot \bigvee \{ b \in \prim(B(X)) \} \\
&=\bigvee \{cb \mid b \in {\prim}(B(X))\} \\
&= \bigvee \{ r_b b \mid b \in {\prim}(B(X))\}.
\end{align*}
Since $c \ge 0$, each $r_b \ge 0$, and as the primitive idempotent $b$ is a meet of elements from $e^\flat[C(Y)]$, so is $r_b b$.
This yields that every positive element of $B(X)$ is a join of meets of elements from $e^\flat[C(Y)]$.

To finish the argument, let $c \in B(X)$. Then there is $s \in \mathbb{R}$ with $c+s \ge 0$. By the previous argument,
we may then write $c+s = \bigvee \{ r_b b \mid b \in {\prim}(B(X))\}$ for some $0 \le r_b \in \mathbb{R}$. Therefore,
\[
c = (c + s) - s = \bigvee \{ r_b b - s \mid b \in \prim(B(X))\}.
\]
Since $r_bb$ is a meet of elements from $e^\flat[C(Y)]$ and $s$ is a scalar, $r_bb-s$ is also a meet of elements from $e^\flat[C(Y)]$,
Thus, every element of $B(X)$ is a join of meets of elements from $e^\flat[C(Y)]$.
\end{proof}

\begin{definition}
Let $\alpha : A \to B$ be a monomorphism in $\bal$.
\begin{enumerate}
\item We say $\alpha[A]$ is join-meet dense in $B$ if each element of $B$ is a join of meets from $\alpha[A]$.
\item We say $\alpha[A]$ is meet-join dense in $B$ if each element of $B$ is a meet of joins from $\alpha[A]$.
\end{enumerate}
\end{definition}

\begin{remark} \label{rem: jm-dense}
Let $\alpha : A \to B$ be a monomorphism in $\bal$ with $B$ a basic algebra.
\begin{enumerate}
\item $\alpha[A]$ is join-meet dense in $B$ iff each primitive idempotent of $B$ is a meet from $\alpha[A]$. The right-to-left
implication follows from the proof of Proposition~\ref{functor 2}. For the left-to-right implication, let $b \in {\prim}(B)$.
Then $b$ is a join of meets of elements from $\alpha[A]$, so there is a meet $c$ of elements from $\alpha[A]$ with $0 < c$ and
$c \le b$. Because $0 \le 1-b$, we have $0 \le c(1-b) \le b(1-b) = 0$. Therefore, $c(1-b) = 0$, so $c = cb$. By Lemma~\ref{prim id},
$cb = rb$ for some nonzero scalar $r$. Thus, $r > 0$. This implies $b = r^{-1}c$ is a meet of elements from $\alpha[A]$.
\item $\alpha[A]$ is join-meet dense in $B$ iff $\alpha[A]$ is meet-join dense in $B$. For the left-to-right implication,
let $b \in B$. We may write $-b = \bigvee \{\bigwedge \{ \alpha(a_{ij}) \mid j \in J\} \mid i \in I\}$ for some $a_{ij} \in A$.
Then $b = \bigwedge \{ \bigvee \{ \alpha(-a_{ij}) \mid j \in J\} \mid i \in I\}$, which shows $b$ is a meet of joins from $\alpha[A]$.
The reverse implication is similar.
\end{enumerate}
\end{remark}

Proposition~\ref{functor 2} motivates the following key definition of the article.

\begin{definition}
Let $A\in\bal$, $B\in\balg$, and $\alpha : A \to B$ be a monomorphism in $\bal$. We call $\alpha : A \to B$ a \emph{basic extension}
if $\alpha[A]$ is join-meet dense in $B$.
\end{definition}

\begin{example} \label{ex: basic extensions}
\begin{enumerate}
\item[]
\item If $e : X \to Y$ is a compactification, then $e^\flat : C(Y) \to B(X)$ is a basic extension by Proposition~\ref{functor 2}.
\item If $X$ is completely regular, then the inclusion map $\iota: C^*(X) \to B(X)$ is a basic extension. To see this, let
$s : X \to \beta X$ be the Stone-\v{C}ech compactification of $X$. By (1), $s^\flat$ is a basic extension. Since $s^\flat$ is an
isomorphism from $C(\beta X)$ to $C^*(X)$, we see that $\iota$ is a basic extension. In fact, $s^\flat$ is isomorphic to $\iota$
in the category of basic extensions described in Definition~\ref{prop: deve a category}.
\item If $Y \in \KHaus$, it follows from (1) that the inclusion map $\iota : C(Y) \to B(Y)$ is a basic extension.
\item If $A \in \bal$, it follows from  \cite[Thm.~2.9]{BMO18c} that $\vartheta_A : A \to B(Y_A)$ is a basic extension.
\end{enumerate}
\end{example}

\begin{definition} \label{prop: deve a category}
\begin{enumerate}
\item[]
\item Let $\basic$ be the category whose objects are basic extensions and whose morphisms are pairs $(\rho, \sigma)$ of morphisms in $\bal$
with $\sigma$ normal and $\sigma\circ\alpha = \alpha' \circ \rho$.
\[
\begin{tikzcd}[column sep=5pc]
A \arrow[r, "\alpha"] \arrow[d, "\rho"'] & B \arrow[d, "\sigma"] \\
A' \arrow[r, "\alpha'"'] & B'
\end{tikzcd}
\]
The composition of two morphisms $(\rho_1, \sigma_1)$ and $(\rho_2, \sigma_2)$ is defined to be $(\rho_2\circ \rho_1, \sigma_2\circ \sigma_1)$.
\[
\begin{tikzcd}[column sep=5pc]
A_1 \arrow[dd, bend right = 35, "\rho_2 \circ \rho_1"'] \arrow[r, "\alpha_1"] \arrow[d, "\rho_1"] & B_1 \arrow[d, "\sigma_1"']
\arrow[dd, bend left=35, "\sigma_2 \circ \sigma_1"] \\
A_2 \arrow[r, "\alpha_2"] \arrow[d, "\rho_2"] & B_2 \arrow[d, "\sigma_2"'] \\
A_3 \arrow[r, "\alpha_3"'] & B_3
\end{tikzcd}
\]
\item Let $\ubasic$ be the full subcategory of $\basic$ consisting of the basic extensions $\alpha : A \to B$ where $A \in \ubal$.
\end{enumerate}
\end{definition}

\begin{theorem} \label{prop: uE reflective}
$\ubasic$ is a reflective subcategory of $\basic$.
\end{theorem}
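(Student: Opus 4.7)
The plan is to build a left adjoint to the inclusion $\ubasic \hookrightarrow \basic$ by \emph{uniformly completing $A$ inside $B$}. Concretely, given a basic extension $\alpha : A \to B$, I would let $\hat{A}$ be the uniform closure of $\alpha[A]$ in $B$ and take the reflection to be the inclusion $\hat\alpha : \hat A \hookrightarrow B$.

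The first step is to observe that every basic algebra $B$ is uniformly complete: as $B$ is Dedekind complete and archimedean, the norm $\|b\| = \inf\{r \in \mathbb R : |b| \le r\}$ is complete on $B$. Next, because $\alpha : A \to B$ is an injective unital $\ell$-algebra homomorphism, it preserves absolute value and reflects order (if $\alpha(x) \ge 0$ then $\alpha(x) = |\alpha(x)| = \alpha(|x|)$, so by injectivity $x = |x| \ge 0$), hence $\alpha$ is an isometry. Therefore the uniform closure $\hat A := \overline{\alpha[A]} \subseteq B$ is a unital sub-$\ell$-algebra of $B$, closed in the complete norm of $B$, and hence $\hat A \in \ubal$. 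Since $\alpha[A] \subseteq \hat A$ and $\alpha[A]$ is already join-meet dense in $B$, so is $\hat A$; thus $\hat\alpha : \hat A \hookrightarrow B$ is an object of $\ubasic$. Writing $\alpha'$ for $\alpha$ regarded as a map into $\hat A$, the candidate unit of the reflection is $\eta_\alpha = (\alpha', \mathrm{id}_B) : \alpha \to \hat\alpha$ in $\basic$.

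The heart of the proof is the universal property. Let $(\rho,\sigma) : \alpha \to \beta$ be a morphism in $\basic$ with target $\beta : C \to D$ in $\ubasic$, so that $C \in \ubal$. I must produce a unique morphism $(\hat\rho, \sigma) : \hat\alpha \to \beta$ with $\hat\rho \circ \alpha' = \rho$. Since every morphism in $\bal$ is contractive with respect to the order-unit norm, $\rho : A \to C$ is uniformly continuous; combined with the fact that $\alpha$ is an isometry, this shows that for any $b \in \hat A$ and any sequence $(\alpha(a_n)) \to b$, the sequence $(\rho(a_n))$ is Cauchy in $C$. Completeness of $C$ then produces a limit, which I define to be $\hat\rho(b)$. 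Independence of the chosen sequence, unital $\ell$-algebra homomorphism properties, and the identity $\hat\rho \circ \alpha' = \rho$ follow from passing operations to the limit. Commutativity of the square for $(\hat\rho,\sigma)$, namely $\sigma \circ \hat\alpha = \beta \circ \hat\rho$, holds on the dense subset $\alpha[A]$ by hypothesis and therefore extends to $\hat A$ by continuity of $\sigma$ and $\beta$. Uniqueness of $\hat\rho$ is immediate: any $\ell$-algebra homomorphism $\hat A \to C$ extending $\rho$ must be contractive, hence continuous, and therefore determined on the dense subset $\alpha[A]$. Functoriality of the assignment $\alpha \mapsto \hat\alpha$ and naturality of $\eta$ are then formal consequences of the universal property.

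I expect the main technical point to be the verification that $\alpha$ is an isometry, which is what ensures that the closure of $\alpha[A]$ inside $B$ genuinely realizes the abstract uniform completion of $A$; once this is in hand, the extension of $\rho$ by uniform continuity into the complete algebra $C$ is routine, as are the continuity arguments showing that $\hat\rho$ respects all the ring and lattice operations.
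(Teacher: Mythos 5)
Your proof is correct, but it takes a genuinely different route from the paper's. The paper invokes the known fact that $\ubal$ is a reflective subcategory of $\bal$ with reflector $A \mapsto C(Y_A)$ (via the Yosida representation $\zeta_A$), defines $\widehat{\alpha} : C(Y_A) \to B$ as the unique factorization of $\alpha$ through $\zeta_A$, observes that $\widehat{\alpha}[C(Y_A)] \supseteq \alpha[A]$ is join-meet dense, and takes $(\zeta_A, 1_B)$ as the reflection unit; the universal property is then inherited from the reflectivity of $\ubal$ in $\bal$. You instead realize the reflection \emph{internally} as the norm-closure $\hat A$ of $\alpha[A]$ inside $B$, which requires you to establish the analytic facts that basic algebras are uniformly complete (true: Dedekind complete implies uniformly complete, as the paper notes via \cite[Thm.~3.3]{BMO13b}), that monomorphisms in $\bal$ are isometries for the order-unit norm (your order-reflection argument is right), and that homomorphisms into a uniformly complete target extend uniquely from a dense subalgebra. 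The paper's argument is shorter because it delegates all the analysis to the cited reflectivity result; yours is more self-contained and makes the reflection concrete as a subobject of $B$, at the cost of several routine but nontrivial verifications (closure of a sub-$\ell$-algebra is a sub-$\ell$-algebra, continuity of the lattice and ring operations, well-definedness of the limit extension). The two constructions agree up to isomorphism, since both satisfy the same universal property, and indeed your $\hat A$ is isomorphic to $C(Y_A)$.
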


\begin{proof}
It is well known (see, e.g., \cite[p.~447]{BMO13a}) that $\ubal$ is a reflective subcategory of $\bal$, and the reflector sends $A \in \bal$ to $C(Y_A)$.
If $\alpha : A \to C$ is a morphism in $\bal$ with $C \in \ubal$, let $\widehat{\alpha}:C(Y_A) \to C$ be the unique morphism
in $\bal$ with $\widehat{\alpha} \circ \zeta_A = \alpha$.

Define a functor $\sf r : \basic \to \ubasic$ as follows. If $\alpha : A \to B$ is a basic extension, then $\widehat{\alpha}[C(Y_A)]$ is
join-meet dense in $B$ because it contains $\alpha[A]$. Thus, $\widehat{\alpha} : C(Y_A) \to B$ is a basic extension. We set
${\sf r}(\alpha) = \widehat{\alpha}$. If $(\rho, \sigma)$ is a morphism in $\basic$, we set ${\sf r}(\rho, \sigma) = ((\rho_*)^*, \sigma)$.
To see that $((\rho_*)^*, \sigma)$ is a morphism in $\basic$, consider the following diagram.
\[
\begin{tikzcd}[column sep = 5pc]
A \arrow[rr, "\alpha"] \arrow[rd, "\zeta_A"'] \arrow[ddd, "\rho"'] && B \arrow[ddd, "\sigma"] \\
& C(Y_A) \arrow[d, "(\rho_*)^*"] \arrow[ru, "\widehat{\alpha}"'] & \\
& C(Y_{A'}) \arrow[rd, "\widehat{\alpha'}"] & \\
A' \arrow[rr, "\alpha'"'] \arrow[ru, "\zeta_{A'}"] && B'
\end{tikzcd}
\]
We have
\[
\sigma \circ \widehat{\alpha} \circ \zeta_A = \sigma \circ \alpha = \alpha' \circ \rho = \widehat{\alpha'} \circ \zeta_{A'} \circ \rho =
\widehat{\alpha'} \circ (\rho_*)^* \circ \zeta_A.
\]
Since $\zeta_A$ is epic (see, e.g., \cite[Lem.~2.9]{BMO13a}), $\sigma \circ \widehat{\alpha} = \widehat{\alpha'} \circ (\rho_*)^*$.
Therefore, $((\rho_*)^*, \sigma)$ is a morphism in $\basic$.
Since $C$ is a functor, it is straightforward to see that $\sf r$ is a functor.
For a basic extension $\alpha : A \to B$, we let ${\sf r}_\alpha$  be the morphism $(\zeta_A, 1_B)$ from $\alpha$ to $\widehat{\alpha}$,
where $1_B$ is the identity on $B$.
Suppose that $(\rho, \sigma)$ is a morphism to an object $\alpha' : A' \to B'$ of $\ubasic$, so $A' \in \ubal$.
\[
\begin{tikzcd}[column sep=5pc]
A \arrow[r, "\alpha"] \arrow[d, "\zeta_A"] \arrow[dd, bend right = 45, "\rho"'] & B \arrow[d, "1_B"'] \arrow[dd, bend left = 45, "\sigma"] \\
C(Y_A) \arrow[r, "\widehat{\alpha}"'] \arrow[d, "\widehat{\rho}"] & B \arrow[d, "\sigma"'] \\
A' \arrow[r, "\alpha'"'] & B'
\end{tikzcd}
\]
Then $(\widehat{\rho}, \sigma)$ is a morphism in $\ubasic$, and since $\widehat{\rho}$ is the unique morphism satisfying
$\widehat{\rho}\circ\zeta_A = \rho$, it follows that $(\widehat{\rho}, \sigma)$ is the unique morphism satisfying
$(\widehat{\rho}, \sigma) \circ (\zeta_A, 1_B) = (\rho, \sigma)$. This proves that $\ubasic$ is a reflective subcategory of $\basic$.
\end{proof}

Define a contravariant functor ${\sf E} : \C \to \basic$ as follows. If $e : X \to Y$ is a compactification, define
$\sf E(e)$ to be the basic extension $e^\flat : C(Y) \to B(X)$. For a morphism $(f,g)$ in $\C$
\[
\begin{tikzcd}[column sep = 5pc]
X \arrow[r, "e"] \arrow[d, "f"'] & Y \arrow[d, "g"] \\
X' \arrow[r, "e'"'] & Y'
\end{tikzcd}
\]
define ${\sf E}(f,g)$ to be the pair $(g^*, f^+)$
\[
\begin{tikzcd}[column sep = 5pc]
C(Y') \arrow[ r, "(e')^\flat"] \arrow[d, "g^*"'] & B(X') \arrow[d, "f^+"] \\
C(Y) \arrow[r, "e^\flat"'] & B(X),
\end{tikzcd}
\]

\begin{proposition} \label{prop: E}
${\sf E} : \C \to \basic$ is a contravariant functor such that each object of $\ubasic$ is isomorphic to
${\sf E}(e)$ for some compactification $e : X \to Y$.
\end{proposition}

\begin{proof}
Let $e : X \to Y$ be a compactification. By Proposition~\ref{functor 2}, $e^\flat : C(Y) \to B(X)$ is a basic extension.
Thus, ${\sf E}(e) \in \basic$. Let $(f,g)$ be a morphism in $\C$.
Then ${\sf E}(f,g) = (g^*, f^+)$. We show that ${\sf E}(f,g)$ is a morphism in $\basic$. Let $a \in C(Y')$. Then
\begin{align*}
(f^+\circ (e')^\flat)(a) &= f^+((e')^\flat(a)) = (a\circ e')\circ f = a\circ (e' \circ f) = a\circ (g \circ e) \\
&= (a \circ g)\circ e = e^\flat(g^*(a)) = (e^\flat\circ g^*)(a).
\end{align*}
This yields $f^+\circ (e')^\flat = e^\flat \circ g^*$. Because $g^*, f^+$ are morphisms in $\bal$ and $f^+$ is normal,
${\sf E}(f,g)$ is a morphism in $\basic$. From the definition of composition in $\C$ and $\basic$ it is elementary
to see that $\sf E$ preserves composition and identity morphisms. Thus, $\sf E$ is a contravariant functor.
That each object of $\ubasic$ is isomorphic to ${\sf E}(e)$ for some compactification $e : X \to Y$
follows from the definition of $\ubasic$ and Gelfand-Naimark-Stone duality.
\end{proof}

\section{The functor ${\sf C}: \basic \to \C$}

\begin{convention}
For a morphism $\alpha:A\to B$ in $\bal$, let $\alpha_\flat : X_B \to Y_A$ be the restriction of $\alpha_* : Y_B \to Y_A$ to $X_B$.
\end{convention}

\begin{definition} \label{def:tau_alpha}
Let $\alpha : A \to B$ be a monomorphism in $\bal$ with $B$ a basic algebra. Define a topology $\tau_\alpha$ on $X_B$ as the least topology
making $\alpha_\flat:X_B \to Y_A$ continuous.
\end{definition}

\begin{remark}
Following usual terminology, we will refer to the topological space $(X_B, \tau_\alpha)$ as $X_B$ when there is no danger of confusion
about which topology we are using.
\end{remark}

\begin{proposition} \label{jm char}
Let $\alpha : A \to B$ be a monomorphism in $\bal$ with $B$ a basic algebra. Then $\alpha_\flat : X_B \to Y_A$ is \emph{1-1} iff
$\alpha[A]$ is join-meet dense in $B$.
\end{proposition}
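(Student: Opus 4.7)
The plan is to pass through the isomorphism $\vartheta_B : B \xrightarrow{\sim} B(X_B)$ from Proposition~\ref{basic char}, which identifies each primitive idempotent $e$ of $B$ with a characteristic function $\chi_{\{x\}}$ (Lemma~\ref{prim id} matches $e$ with the unique $x \in X_B$ satisfying $(1-e)B = N_x$) and identifies arbitrary bounded meets in $B$ with pointwise infima. Under this identification, $\alpha_\flat(x) = \alpha^{-1}(N_x) = \{a \in A \mid \alpha(a)(x) = 0\}$, so that injectivity of $\alpha_\flat$ is equivalent to $\alpha[A]$ separating points of $X_B$. I would then invoke Remark~\ref{rem: jm-dense}(1), which reduces join-meet density to the statement that every primitive idempotent of $B$ is a meet of elements of $\alpha[A]$.

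For the direction ``join-meet dense $\Rightarrow$ $\alpha_\flat$ is 1-1'', I would fix distinct $x, x' \in X_B$, write $\chi_{\{x\}} = \bigwedge S$ with $S \subseteq \alpha[A]$, and evaluate pointwise: the infimum is $1$ at $x$ and $0$ at $x'$, so some $s \in S$ satisfies $s(x') < 1 \le s(x)$. This $s$ lies in $\alpha[A]$ and separates $x$ from $x'$, forcing $\alpha_\flat(x) \neq \alpha_\flat(x')$.

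For the converse, suppose $\alpha_\flat$ is 1-1 and fix a primitive idempotent $e$ corresponding to $x \in X_B$. For each $x' \in X_B \setminus \{x\}$, injectivity of $\alpha_\flat$ yields $a_{x'} \in A$ with $\alpha(a_{x'})(x) \neq \alpha(a_{x'})(x')$. An affine adjustment (valid since $A$ is an $\mathbb{R}$-algebra and $\alpha$ is unital) followed by the truncation $(\,\cdot\,\vee 0) \wedge 1$ (valid since $\alpha$ preserves lattice operations, so $\alpha[A]$ is closed under them) produces $s_{x'} \in \alpha[A]$ with $0 \le s_{x'} \le 1$, $s_{x'}(x) = 1$, and $s_{x'}(x') = 0$. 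Then $S = \{s_{x'} \mid x' \in X_B \setminus \{x\}\}$ is a bounded subset of $\alpha[A]$ whose pointwise infimum is $\chi_{\{x\}}$, and since meets of bounded families in $B \cong B(X_B)$ are computed pointwise, $\bigwedge S = e$ in $B$. Remark~\ref{rem: jm-dense}(1) then yields join-meet density.

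The main obstacle is cleanly justifying that arbitrary bounded meets in $B$ are computed pointwise on $X_B$; this is not completely formal and must be extracted from the fact that $\vartheta_B$ is an isomorphism in $\bal$ between two Dedekind complete objects, so that it preserves all existing suprema and infima. Once this pointwise-meet principle is in hand, both directions become the combinatorial manipulations above, using only Lemma~\ref{prim id}, Remark~\ref{rem: jm-dense}(1), and the closure of $\alpha[A]$ under affine combinations and lattice operations.
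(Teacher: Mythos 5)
Your argument is correct and is essentially the paper's proof transported through the isomorphism $\vartheta_B : B \to B(X_B)$: both reduce via Remark~\ref{rem: jm-dense}(1) to showing that each primitive idempotent is a meet from $\alpha[A]$, and your separating elements $s_{x'} = (\lambda\alpha(a_{x'})+\mu \vee 0)\wedge 1$ are exactly the paper's elements $\alpha(a') = \alpha(1 \wedge r^{-1}a)$ with $c \le \alpha(a') \le 1-b$, written in function language. The pointwise-meet principle you flag as the main obstacle is already available: it is recorded in the proof of Proposition~\ref{basic char} that bounded joins and meets in $B(X_A)$ are pointwise, and $\vartheta_B$, being an $\ell$-algebra isomorphism, is an order isomorphism and hence transfers them to $B$; the paper sidesteps this only by arguing intrinsically with idempotents (via Lemma~\ref{prim id} and the identity $1 = \bigvee \prim(B)$) rather than with point evaluations.
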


\begin{proof}
By Remark~\ref{rem: jm-dense}(1), it is sufficient to show that $\alpha_\flat$ is 1-1 iff each $b \in {\prim}(B)$ is meet of elements
from $\alpha[A]$.

First suppose that $\alpha_\flat$ is 1-1. Let $b \in {\func{Prim}}(B)$. We show that $1-b$ is a join of elements from $\alpha[A]$.
Let $g$ be the join of all $\alpha(a) \in A$ with $0 \le \alpha(a) \le 1-b$. Clearly $g \le 1-b$. To see that $1-b\le g$, let
$c \in {\func{Prim}}(B)$ with $c \ne b$. Since $\alpha_\flat$ is 1-1, there is $\alpha(a) \in (1-b)B$ with $\alpha(a) \notin (1-c)B$.
By replacing $a$ by $|a|$, and then multiplying by an appropriate scalar, we may assume $0 \le a \le 1$. Since $\alpha(a) \in (1-b)B$
and $1-b$ is an idempotent,
$\alpha(a) = (1-b)\alpha(a)$, so $\alpha(a)b = 0$. Also $\alpha(a) \not \in (1-c)B$ implies $\alpha(a)(1-c) \ne \alpha(a)$, so $\alpha(a)c\ne 0$.
Since $a > 0$, Lemma~\ref{prim id} yields $\alpha(a)c = rc$ for some real number $r > 0$.
We have $c = r^{-1}rc = r^{-1}\alpha(a)c \leq r^{-1}\alpha(a)$ since $c \leq 1$.
Let $a' = 1 \wedge r^{-1}a$. Then $a' \in A$ and $c \le \alpha(a')$. Moreover, $0 \le a' \le r^{-1}a$, so
$0 \le \alpha(a')b \le r^{-1}\alpha(a)b = 0$. This implies $\alpha(a')b = 0$, so $\alpha(a')(1-b) = \alpha(a')$.
Since $\alpha(a')\le 1$, we have $\alpha(a')(1-b)\le 1-b$, hence $\alpha(a')\le 1-b$. Therefore,
$\alpha(a') \le g$ by the definition of $g$. This yields $c \le \alpha(a') \le g$ for each $c \ne b$.
Since $1-b$ is the join of all primitive idempotents $c \ne b$, we get $1-b \le g \le 1-b$, so $1-b = g$.
This shows that $1-b$ is a join of elements from $\alpha[A]$, and hence $b$ is a meet of elements from $\alpha[A]$.

Conversely, suppose that each $b \in {\prim}(B)$ is a meet of elements from $\alpha[A]$. Let $M,N \in X_B$ and $\alpha^{-1}(M) = \alpha^{-1}(N)$.
By Lemma~\ref{prim id}, there are $b,c \in {\func{Prim}}(B)$ with $M = (1-b)B $ and $N = (1-c)B$. Thus, $\alpha^{-1}((1-b)B) = \alpha^{-1}((1-c)B)$.
As $b$ is primitive, $1-b$ is the join of the elements from $\alpha[A]$ below it. Because $0 \le 1-b$, these elements can
be taken to be positive. Take $a \in A$ with $0 \le \alpha(a) \le 1-b$. By the calculation in the first paragraph above,
$\alpha(a)b = 0$, and so $\alpha(a) = \alpha(a)(1-b)$. Therefore, $a \in \alpha^{-1}((1-b)B) = \alpha^{-1}((1-c)B)$, and
hence $\alpha(a)\in (1-c)B$. Since $1-c$ is an idempotent, it follows that
$\alpha(a) = \alpha(a)(1-c)$, which implies $\alpha(a) \le 1-c$. As $1-b$ is the join of all such $\alpha(a)$ we see that $1-b \le 1-c$.
Because $b, c$ are primitive idempotents, this implies $b = c$, and so $M = N$. Thus, $\alpha_\flat$ is 1-1.
\end{proof}

\begin{theorem} \label{lem:compactification}
If $\alpha : A \to B$ is a basic extension, then $X_B$ is completely regular and $\alpha_\flat : X_B \to Y_A$ is a compactification.
\end{theorem}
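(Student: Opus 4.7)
I would verify four things in turn: that $\alpha_\flat$ is injective, that it is continuous, that it is a topological embedding of $(X_B,\tau_\alpha)$ into $Y_A$, and that its image is dense in $Y_A$. Together with the fact that $Y_A$ is compact Hausdorff, these items give the desired compactification, and complete regularity of $X_B$ follows for free because a subspace of a completely regular space is completely regular.

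Injectivity of $\alpha_\flat$ is essentially already done: because $\alpha\colon A\to B$ is a basic extension, $\alpha[A]$ is join-meet dense in $B$, so Proposition~\ref{jm char} gives that $\alpha_\flat$ is one-to-one. Continuity of $\alpha_\flat$ holds by the very definition of $\tau_\alpha$ as the least topology making $\alpha_\flat$ continuous. Since $\tau_\alpha$ is this initial topology with respect to a single map, its open sets are exactly the preimages $\alpha_\flat^{-1}(U)$ for $U$ open in $Y_A$; combined with injectivity, this means that the bijection $\alpha_\flat\colon X_B\to\alpha_\flat[X_B]$ is a homeomorphism when the image carries the subspace topology from $Y_A$. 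Thus $\alpha_\flat$ is a topological embedding, and $X_B$ is completely regular.

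The main point is to prove density of $\alpha_\flat[X_B]$ in $Y_A$. I would argue by contradiction. If the image were not dense, then since basic open sets of the Yosida topology have the form $\{N'\in Y_A \mid a\notin N'\}$, there would exist $a\in A$ and $N\in Y_A$ with $a\notin N$ but $a\in\alpha^{-1}(M)=\alpha_\flat(M)$ for every $M\in X_B$. In other words, $\alpha(a)\in M$ for every $M\in X_B$, so the closed set $Z_\ell(\alpha(a)B)$ in $Y_B$ contains $X_B$. Since $B$ is a basic algebra, the proof of Proposition~\ref{basic char} shows that $X_B$ is dense in $Y_B$, so $Z_\ell(\alpha(a)B)=Y_B$. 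This means $\alpha(a)$ lies in every maximal $\ell$-ideal of $B$, hence $\alpha(a)=0$, and injectivity of $\alpha$ forces $a=0$, contradicting $a\notin N$.

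The main obstacle is the density step, since it is the one place where the full basic-extension hypothesis is used in a nontrivial way: injectivity of $\alpha$ (from monicity in $\bal$) is combined with density of $X_B$ in $Y_B$, which in turn is where atomicity of $\Id(B)$ enters. The remaining steps are direct unwrappings of the definitions of basic extension, the Yosida space, and the initial topology.
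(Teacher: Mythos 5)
Your proposal is correct and follows essentially the same route as the paper: injectivity via Proposition~\ref{jm char}, the embedding property from the initial topology $\tau_\alpha$, and density of $\alpha_\flat[X_B]$ deduced from density of $X_B$ in $Y_B$ (atomicity of $\Id(B)$). The only difference is cosmetic: the paper gets density in $Y_A$ by noting that $\alpha_*:Y_B\to Y_A$ is onto and pushes the dense set $X_B$ forward, whereas you unwind the same fact into a contradiction with basic open sets $\{N'\mid a\notin N'\}$ (your $Z_\ell(\alpha(a)B)$ should strictly be $Z_\ell$ of the $\ell$-ideal generated by $\alpha(a)$, but the set $\{M\mid\alpha(a)\in M\}$ is closed either way, so the argument stands).
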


\begin{proof}
By Proposition~\ref{jm char}, $\alpha_\flat : X_B \to Y_A$ is 1-1. Therefore, $X_B$ is homeomorphic to $\alpha_\flat[X_B]$ by
Definition~\ref{def:tau_alpha}, and so $X_B$ is completely regular. Since $\alpha$ is 1-1, $\alpha_* : Y_B \to Y_A$ is onto,
and $X_{B}$ is dense in $Y_B$ because $B$ is atomic. Thus, $\alpha_\flat[X_B]$ is dense in $Y_A$, and hence $\alpha_\flat : X_{B} \to Y_A$
is a compactification.
\end{proof}

\begin{example} \label{ex: 5.5}
Let $X$ be a completely regular space. By Example~\ref{ex: basic extensions}(2), the inclusion $\iota : C^*(X) \to B(X)$ is a basic
extension. Then $\iota_\flat : X_{B(X)} \to Y_{C^*(X)}$ is a compactification by Theorem~\ref{lem:compactification}. We claim that
$\iota_\flat$ is isomorphic to the Stone-\v{C}ech compactification $s : X \to \beta X$ in $\C$. By Convention~\ref{con: beta},
$\beta X = Y_{C^*(X)}$ and $s(x) = M_x$. Consider the following diagram
\[
\begin{tikzcd}
X \arrow[r, "s"] \arrow[d, "\eta_X"'] & Y_{C^*(X)} \arrow[d, equal] \\
X_{B(X)} \arrow[r, "\iota_\flat"'] & Y_{C^*(X)}
\end{tikzcd}
\]
where we recall from the proof of Theorem~\ref{thm:tarski} that $\eta_X : X \to X_{B(X)}$, sending $x$ to $N_x$, is a bijection.
The diagram commutes because
\[
\iota_\flat(\eta_X(x)) = \iota^{-1}(N_x) = \{ f \in C^*(X) \mid f(x) = 0\} = M_x = s(x).
\]
To see that $\eta_X$ is a homeomorphism, since $(X_{B(X)}, \tau_\iota)$ is completely regular, it has a basis of cozero sets.
Let $U$ be a cozero set in $X_{B(X)}$. Then there is $f \in C^*(X)$ with $U = \{ M \in X_{B(X)} \mid f \notin M\}$. We have
\[
\eta_X^{-1}(U) = \{ x \in X \mid N_x \in U \} = \{ x\in X \mid f \notin N_x\} = \{ x \in X \mid f(x) \ne 0 \},
\]
which is the cozero set of $f$ in $X$. Since $\eta_X$ is a bijection, we conclude that $\eta_X$ is a homeomorphism.
Thus, $\iota_\flat$ and $s$ are isomorphic in $\C$.
\end{example}

\begin{lemma} \label{lem: commute}
Suppose that $e : X \to Y$ and $e' : X' \to Y'$ are compactifications, $g : Y \to Y'$ is a continuous map, and $f : X \to X'$ is
a map such that $e'\circ f = g \circ e$. Then $f$ is continuous.
\[
\begin{tikzcd}[column sep = 5pc]
X \arrow[r, "e"] \arrow[d, "f"'] & Y \arrow[d, "g"] \\
X' \arrow[r, "e'"'] & Y'
\end{tikzcd}
\]
\end{lemma}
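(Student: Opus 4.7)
The plan is to use the fact that $e'$ is an embedding to pull open sets of $X'$ back to open sets of $Y'$, and then use continuity of $g$ and $e$ to conclude.

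First, I would take an arbitrary open set $V \subseteq X'$. Since $e' : X' \to Y'$ is a topological embedding, the subspace topology on $e'[X']$ inherited from $Y'$ agrees with the topology transferred from $X'$ via $e'$. Hence there exists an open set $W \subseteq Y'$ such that $V = (e')^{-1}(W)$.

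Next, I would compute the preimage $f^{-1}(V)$ using the commutativity hypothesis. Since $e' \circ f = g \circ e$, we have
\[
f^{-1}(V) = f^{-1}((e')^{-1}(W)) = (e' \circ f)^{-1}(W) = (g \circ e)^{-1}(W) = e^{-1}(g^{-1}(W)).
\]
Continuity of $g$ gives that $g^{-1}(W)$ is open in $Y$, and continuity of $e$ then gives that $e^{-1}(g^{-1}(W))$ is open in $X$. Therefore $f^{-1}(V)$ is open, and $f$ is continuous.

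There is no real obstacle here; the only subtle point is the very first step, namely that every open subset of $X'$ is the preimage under $e'$ of some open subset of $Y'$, and this is just the definition of an embedding. The density of $e[X]$ in $Y$ and the compactness of $Y, Y'$, though built into the definition of a compactification, are not needed for this lemma — only the fact that $e'$ is an embedding and $g, e$ are continuous plays a role.
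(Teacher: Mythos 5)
Your proof is correct and is essentially identical to the paper's: both use the fact that $e'$ is an embedding to write an arbitrary open set of $X'$ as $(e')^{-1}(W)$ for $W$ open in $Y'$, then compute $f^{-1}((e')^{-1}(W)) = e^{-1}(g^{-1}(W))$ via the commuting square. Your closing remark that density and compactness are not needed is accurate and consistent with the paper's argument.
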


\begin{proof}
Let $U$ be open in $X'$. Then there is an open set $V$ of $Y'$ with $U = (e')^{-1}(V)$. We have
\[
f^{-1}(U) = f^{-1}((e')^{-1}(V)) = (e'\circ f)^{-1}(V) = (g\circ e)^{-1}(V) = e^{-1}(g^{-1}(V)),
\]
so $f^{-1}(U)$ is open in $X$. Thus, $f$ is continuous.
\end{proof}

Define a functor ${\sf C} : \basic \to \C$ as follows. If $\alpha : A \to B$ is a basic extension, set ${\sf C}(\alpha)$ to be
the compactification $\alpha_\flat : X_B \to Y_A$. For a morphism $(\rho, \sigma)$ in $\basic$
\[
\begin{tikzcd}[column sep = 5pc]
A \arrow[r, "\alpha"] \arrow[d, "\rho"'] & B \arrow[d, "\sigma"] \\
A' \arrow[r, "\alpha'"'] & B'
\end{tikzcd}
\]
define ${\sf C}(\rho, \sigma)$ to be $(\sigma_+, \rho_*)$, where $\sigma_+$ is the restriction of $\sigma_* : Y_{B'} \to Y_B$ to $X_{B'}$.
\[
\begin{tikzcd}[column sep = 5pc]
X_{B'} \arrow[r, "\alpha'_\flat"] \arrow[d, "\sigma_+"'] & Y_{A'} \arrow[d, "\rho_*"] \\
X_B \arrow[r, "\alpha_\flat"'] & Y_A
\end{tikzcd}
\]
Since $\sigma\circ \alpha = \alpha' \circ \rho$ we have $\rho_* \circ \alpha'_* = \alpha_* \circ \sigma_*$. Restricting both sides to $X_{B'}$
shows this diagram is commutative. As an immediate consequence of Lemma~\ref{lem: commute}, we have:

\begin{lemma} \label{lem: sigma plus}
$\sigma_+$ is continuous, and hence $(\sigma_+, \rho_*)$ is a morphism in $\C$.
\end{lemma}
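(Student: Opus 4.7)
The plan is to invoke Lemma~\ref{lem: commute} directly: essentially all the work has already been set up, and the lemma reduces continuity of $\sigma_+$ to commutativity of a square involving two compactifications and a continuous map between their "tops".

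First I would note that Theorem~\ref{lem:compactification} guarantees that both $\alpha_\flat : X_B \to Y_A$ and $\alpha'_\flat : X_{B'} \to Y_{A'}$ are compactifications, since $\alpha$ and $\alpha'$ are basic extensions. Next, I would verify the commutativity relation $\alpha_\flat \circ \sigma_+ = \rho_* \circ \alpha'_\flat$. This is exactly the content of the diagram already displayed just before the lemma: starting from $\sigma \circ \alpha = \alpha' \circ \rho$ and applying the contravariant functor $Y$ yields $\rho_* \circ \alpha'_* = \alpha_* \circ \sigma_*$ on all of $Y_{B'}$, and restricting the source to $X_{B'}$ produces the required identity (using that $\sigma_+$ is by definition the restriction of $\sigma_*$ and $\alpha_\flat$, $\alpha'_\flat$ are the restrictions of $\alpha_*$, $\alpha'_*$).

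With this identity in hand, I apply Lemma~\ref{lem: commute} with $e = \alpha'_\flat$, $e' = \alpha_\flat$, $g = \rho_*$ (which is continuous since $\rho$ is a morphism in $\bal$), and $f = \sigma_+$. The hypotheses are met, so the lemma yields continuity of $\sigma_+ : X_{B'} \to X_B$. Finally, continuity of $\sigma_+$ and $\rho_*$ together with the commutative square above are exactly the conditions for $(\sigma_+, \rho_*)$ to be a morphism in $\C$ from $\alpha'_\flat$ to $\alpha_\flat$.

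There is no real obstacle here: the topology $\tau_\alpha$ on $X_B$ was defined in Definition~\ref{def:tau_alpha} precisely as the least topology making $\alpha_\flat$ continuous, so continuity of maps into $X_B$ is controlled by their composition with $\alpha_\flat$, which is exactly the content of Lemma~\ref{lem: commute}. The only point that requires any care is being explicit that $\sigma_+$ is well defined as a map $X_{B'} \to X_B$; this was already established in the discussion preceding the convention defining $\alpha_+$, using that $\sigma$ is normal so that $\sigma_*$ is skeletal (hence quasi-open) and therefore sends isolated points to isolated points.
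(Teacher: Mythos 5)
Your proof is correct and follows the paper's own route exactly: the paper also derives the commutativity $\alpha_\flat \circ \sigma_+ = \rho_* \circ \alpha'_\flat$ by applying $Y$ to $\sigma\circ\alpha = \alpha'\circ\rho$ and restricting to $X_{B'}$, and then cites Lemma~\ref{lem: commute} to get continuity of $\sigma_+$. Your added remarks on the well-definedness of $\sigma_+$ (via normality of $\sigma$ and skeletality of $\sigma_*$) are consistent with the paper's earlier discussion.
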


\begin{proposition} \label{prop: C}
${\sf C} : \basic \to \C$ is a contravariant functor.
\end{proposition}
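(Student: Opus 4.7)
The plan is to verify in sequence the three requirements: (i) $\sf C$ sends objects of $\basic$ to objects of $\C$; (ii) $\sf C$ sends morphisms of $\basic$ to morphisms of $\C$; (iii) $\sf C$ reverses composition and preserves identities. Since the substantive analytic content has already been supplied in the preceding results, the proof will be essentially an assembly of pieces already in hand.

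For (i), if $\alpha : A \to B$ is a basic extension, Theorem~\ref{lem:compactification} states directly that $\alpha_\flat : X_B \to Y_A$ is a compactification, so ${\sf C}(\alpha) \in \C$. For (ii), given a morphism $(\rho,\sigma) : \alpha \to \alpha'$ in $\basic$, I would first observe that $\sigma_+ : X_{B'} \to X_B$ is a well-defined set map because $\sigma$ is a normal $\ell$-algebra homomorphism and such maps send isolated points of the Yosida space to isolated points (by the skeletal-maps discussion preceding the definition of $\alpha_+$). The commutativity of the defining square $\rho_* \circ \alpha'_\flat = \alpha_\flat \circ \sigma_+$ in $\C$ is obtained by applying the contravariant functor $Y$ to $\sigma \circ \alpha = \alpha' \circ \rho$, giving $\rho_* \circ \alpha'_* = \alpha_* \circ \sigma_*$, and then restricting to $X_{B'}$ (this is the computation already made just before Lemma~\ref{lem: sigma plus}). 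Continuity of $\sigma_+$ is then supplied by Lemma~\ref{lem: sigma plus} itself, so $(\sigma_+, \rho_*)$ is a morphism in $\C$.

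For (iii), I would invoke that $Y : \bal \to \creg$ is a contravariant functor, yielding $(\rho_2 \circ \rho_1)_* = (\rho_1)_* \circ (\rho_2)_*$ and $(\sigma_2 \circ \sigma_1)_* = (\sigma_1)_* \circ (\sigma_2)_*$ for composable morphisms in $\basic$; restricting the latter identity to isolated points gives $(\sigma_2 \circ \sigma_1)_+ = (\sigma_1)_+ \circ (\sigma_2)_+$. Combining the two identities shows that ${\sf C}((\rho_2,\sigma_2)\circ(\rho_1,\sigma_1)) = {\sf C}(\rho_1,\sigma_1)\circ{\sf C}(\rho_2,\sigma_2)$. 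Preservation of identities is immediate since $(1_A)_* = 1_{Y_A}$ and $(1_B)_+ = 1_{X_B}$. There is no real obstacle at this stage; the only bookkeeping subtlety is keeping track of which restrictions are to $X_B$ versus $X_{B'}$, and that these restrictions are legitimate because normal homomorphisms preserve isolated points in the Yosida space.
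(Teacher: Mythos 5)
Your proposal is correct and follows essentially the same route as the paper's proof: objects are handled by Theorem~\ref{lem:compactification}, morphisms by the restriction computation together with Lemma~\ref{lem: sigma plus}, and composition/identities by functoriality of $Y$ restricted to isolated points. The only difference is that you make explicit the point (implicit in the paper's definition of $\sigma_+$) that normal homomorphisms send isolated points to isolated points, which is a reasonable bit of added care.
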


\begin{proof}
Let $\alpha : A \to B$ be a basic extension. By Theorem~\ref{lem:compactification}, $\alpha_\flat : X_B \to Y_A$ is a compactification.
Therefore, ${\sf C}(\alpha) \in \C$. By Lemma~\ref{lem: sigma plus}, if $(\rho, \sigma)$ is a morphism in $\basic$, then
${\sf C}(\rho, \sigma) = (\sigma_+, \rho_*)$ is a morphism in $\C$. Suppose that $(\rho_1, \sigma_1)$ and $(\rho_2, \sigma_2)$
are composable morphisms in $\basic$. Then
\[
{\sf C}((\rho_2, \sigma_2)\circ (\rho_1, \sigma_1)) = {\sf C}(\rho_2\circ \rho_1, \sigma_2 \circ \sigma_1) =
((\sigma_2\circ \sigma_1)_+, (\rho_2\circ \rho_1)_*).
\]
Since $(\rho_2\circ\rho_1)_* = (\rho_1)_* \circ (\rho_2)_*$ and $(\sigma_2\circ\sigma_1)_+ = (\sigma_1)_+ \circ (\sigma_2)_+$, we see that
\begin{align*}
{\sf C}((\rho_2, \sigma_2)\circ (\rho_1, \sigma_1)) &= ((\sigma_1)_+ \circ (\sigma_2)_+, (\rho_1)_* \circ (\rho_2)_*) \\
&= ((\sigma_1)_+ , (\rho_1)_*) \circ ((\sigma_2)_+, (\rho_2)_*) = {\sf C}(\rho_1, \sigma_1)\circ {\sf C}(\rho_2, \sigma_2),
\end{align*}
which shows that $\sf C$ preserves composition. It is clear that $\sf C$ preserves identity morphisms.
Thus, $\sf C$ is a contravariant functor.
\end{proof}

\section{Duality between $\C$ and $\ubasic$}

In this section we prove that the functors $\sf E$ and $\sf C$ yield a dual adjunction between $\C$ and $\basic$, which restricts to
a dual equivalence between $\C$ and $\ubasic$. For this we require the following two lemmas.

\begin{lemma} \label{EC}
Let $\alpha : A \to B$ be a basic extension. Then $(\zeta_A, \vartheta_B)$ is a morphism in $\basic$, and it is an isomorphism
provided $A \in \ubal$.
 \[
 \begin{tikzcd}[column sep = 5pc]
 A \arrow[r, "\alpha"] \arrow[d, "\zeta_A"'] & B \arrow[d, "\vartheta_B"] \\
 C(Y_A) \arrow[r, "(\alpha_\flat)^\flat"'] & B(X_B)
 \end{tikzcd}
 \]
\end{lemma}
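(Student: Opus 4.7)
The plan is to verify three things in order: that $\zeta_A$ is a morphism in $\bal$ and $\vartheta_B$ is a \emph{normal} morphism in $\bal$, that the indicated square commutes, and that both components are isomorphisms under the additional hypothesis $A\in\ubal$.

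The first item is nearly immediate. The map $\zeta_A$ is the Yosida representation from Section~2, hence a $\bal$-morphism automatically, and it is an isomorphism exactly when $A\in\ubal$. For $\vartheta_B$, since $B$ is a basic algebra, Proposition~\ref{basic char} already says that $\vartheta_B:B\to B(X_B)$ is an isomorphism in $\bal$; any $\ell$-algebra isomorphism between Dedekind complete objects preserves all existing joins, so $\vartheta_B$ is in particular normal. This also dispatches the ``isomorphism'' clause once commutativity is in hand: when $A\in\ubal$ both components are $\bal$-isomorphisms, so the inverse pair $(\zeta_A^{-1},\vartheta_B^{-1})$ furnishes a two-sided inverse in $\basic$.

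The main step is commutativity, which I would verify pointwise. Fix $a\in A$ and $N\in X_B$. By Lemma~\ref{prim id}, $N$ is a maximal $\ell$-ideal of $B$, and $\alpha_\flat(N)=\alpha^{-1}(N)$ is a maximal $\ell$-ideal of $A$. Unwinding the definitions,
\[
((\alpha_\flat)^\flat\circ\zeta_A)(a)(N)=\zeta_A(a)(\alpha^{-1}(N))
\]
is the unique real number $r$ with $a-r\in\alpha^{-1}(N)$ under $A/\alpha^{-1}(N)\cong\mathbb R$, while
\[
(\vartheta_B\circ\alpha)(a)(N)=\zeta_B(\alpha(a))(N)
\]
is the unique real number $s$ with $\alpha(a)-s\in N$ under $B/N\cong\mathbb R$. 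The assignment $a\mapsto\alpha(a)+N$ has kernel $\alpha^{-1}(N)$ and therefore factors through a unital $\ell$-algebra map $\mathbb R\cong A/\alpha^{-1}(N)\to B/N\cong\mathbb R$. Since the identity is the only such endomorphism of $\mathbb R$, we get $r=s$, proving that the square commutes.

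The only genuine obstacle is identifying that induced map between the two copies of $\mathbb R$ as the identity; everything else is bookkeeping with the Yosida functor, Lemma~\ref{prim id}, and Proposition~\ref{basic char}.
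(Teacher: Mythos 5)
Your proposal is correct, but the way you establish commutativity differs from the paper's. The paper factors the square through $C(Y_B)$: it writes $\vartheta_B=\kappa_B\circ\zeta_B$ and $(\alpha_\flat)^\flat=\kappa_B\circ(\alpha_*)^*$, so that the left subsquare $(\alpha_*)^*\circ\zeta_A=\zeta_B\circ\alpha$ commutes by the naturality of $\zeta$ (i.e.\ by Gelfand--Naimark--Stone duality) and the right subsquare commutes by the definition of $\vartheta_B$. You instead verify the identity pointwise at each $N\in X_B$, reducing it to the observation that the induced unital $\mathbb R$-algebra map $A/\alpha^{-1}(N)\to B/N$ between two copies of $\mathbb R$ must be the identity. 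Both arguments are sound; the paper's is shorter because it reuses already-established functoriality, while yours is more self-contained and makes explicit exactly where the residue-field identification $A/M\simeq\mathbb R$ enters. The remaining ingredients --- normality of $\vartheta_B$ via Proposition~\ref{basic char}, and the isomorphism clause from $\zeta_A$ being an isomorphism when $A\in\ubal$ together with the inverse pair $(\zeta_A^{-1},\vartheta_B^{-1})$ --- match the paper's treatment. One small point worth making explicit in your write-up: when you evaluate $\zeta_A(a)$ at $\alpha_\flat(N)=\alpha^{-1}(N)$, you are using that $\alpha^{-1}(N)$ is a maximal $\ell$-ideal of $A$, which is exactly what makes $\alpha_*$ (and hence $\alpha_\flat$) well defined; your appeal to Lemma~\ref{prim id} to see $N\in Y_B$ covers the $B$-side, and the $A$-side is part of the definition of the functor $Y$.
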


\begin{proof}
The map $\vartheta_B$ is an isomorphism by Proposition~\ref{basic char}, and so it is a normal homomorphism. To see that
$(\zeta_A, \vartheta_B)$ is a morphism in $\basic$ we need to show $(\alpha_\flat)^\flat \circ\zeta_A = \vartheta_B \circ \alpha$.
We have the following diagram.
 \[
 \begin{tikzcd}[column sep=5pc]
 A \arrow[r, "\alpha"] \arrow[d, "\zeta_A"'] & B \arrow[d, "\zeta_B"]  \arrow[dr, "\vartheta_B"] \\
 C(Y_A) \arrow[r, "(\alpha_*)^*"] \arrow[rr, bend right = 15, "(\alpha_\flat)^\flat"'] & C(Y_B) \arrow[r, "\kappa_B"] & B(X_B)
 \end{tikzcd}
 \]
The left-hand side commutes by Gelfand-Naimark-Stone duality, and the right-hand side commutes by the definition of $\vartheta_B$.
Therefore, $(\alpha_\flat)^\flat \circ\zeta_A = \vartheta_B \circ \alpha$, and hence $(\zeta_A, \vartheta_B)$ is a morphism in $\basic$.
If $A \in \ubal$, then $\zeta_A$ is an isomorphism. Since $\vartheta_B$ is an isomorphism, $(\zeta_A, \vartheta_B)$ is an isomorphism.
\end{proof}

\begin{lemma} \label{CE}
Let $e : X \rightarrow Y$ be a compactification. Then $\eta_X: X \to X_{B(X)}$
is a homeomorphism, and $(\eta_X, \xi_Y)$ is an isomorphism of compactifications between $e$ and $(e^\flat)_\flat$.
\[
\begin{tikzcd}[column sep = 5pc]
X \arrow[r, "e"] \arrow[d, "\eta_X"'] & Y \arrow[d, "\xi_Y"] \\
X_{B(X)} \arrow[r, "(e^\flat)_\flat"'] & Y_{C(Y)}
\end{tikzcd}
\]
\end{lemma}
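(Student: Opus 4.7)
The plan is to check commutativity of the square by a direct calculation, and then to show $\eta_X$ is a homeomorphism by comparing the topology $\tau_{e^\flat}$ on $X_{B(X)}$ with the given topology on $X$ across the bijection $\eta_X$. Since $Y \in \KHaus$, the map $\xi_Y$ is already a homeomorphism by Gelfand-Naimark-Stone duality, and $\eta_X$ is a bijection by the proof of Theorem~\ref{thm:tarski}, so once $\eta_X$ is shown to be a homeomorphism we immediately obtain that $(\eta_X, \xi_Y)$ is an isomorphism in $\C$ between $e$ and $(e^\flat)_\flat$.

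For commutativity, I compute, for $x \in X$, that $(e^\flat)_\flat(\eta_X(x)) = (e^\flat)^{-1}(N_x) = \{f \in C(Y) \mid f(e(x)) = 0\} = M_{e(x)} = \xi_Y(e(x))$, so $(e^\flat)_\flat \circ \eta_X = \xi_Y \circ e$. Continuity of $\eta_X$ is then automatic from the universal property of the initial topology $\tau_{e^\flat}$ defined in Definition~\ref{def:tau_alpha}: it suffices that $(e^\flat)_\flat \circ \eta_X$ be continuous, which it is, being equal to $\xi_Y \circ e$.

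For openness of $\eta_X$, I would use that $e$ is a topological embedding, so every open $U \subseteq X$ has the form $U = e^{-1}(V)$ for some open $V \subseteq Y$. Using commutativity of the square and the bijectivity of both $\eta_X$ and $\xi_Y$, a point $\eta_X(x)$ lies in $((e^\flat)_\flat)^{-1}(\xi_Y(V))$ iff $\xi_Y(e(x)) \in \xi_Y(V)$ iff $e(x) \in V$, i.e., iff $x \in U$; hence $\eta_X(U) = ((e^\flat)_\flat)^{-1}(\xi_Y(V))$, which is open in $X_{B(X)}$ since $\xi_Y(V)$ is open in $Y_{C(Y)}$. This is the main subtle point, since one must carefully translate the topological embedding hypothesis on $e$ into a statement about subbasic opens of the initial topology on $X_{B(X)}$, bookkeeping with $\xi_Y$. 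Combined with continuity, it follows that $\eta_X$ is a homeomorphism, completing the argument.
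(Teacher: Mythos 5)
Your proposal is correct and follows essentially the same route as the paper: the identical computation $(e^\flat)_\flat(\eta_X(x)) = M_{e(x)}$ gives commutativity, and your initial-topology argument for the continuity and openness of $\eta_X$ is just an inline unfolding of the paper's Lemma~\ref{lem: commute}, which the paper applies once to $(\eta_X,\xi_Y)$ and once to the inverse pair $(\eta_X^{-1},\xi_Y^{-1})$.
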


\begin{proof}
We already observed in the proof of Theorem~\ref{thm:tarski} that $\eta_X$ is a bijection.
We show that $\xi_Y\circ e = (e^\flat)_\flat \circ \eta_X$. Let $x \in X$. Then $\xi_Y(e(x)) = M_{e(x)}$. We have
\[
(e^\flat)_\flat(\eta_X(x)) = \{ c \in C(Y) \mid e^\flat(c)(x) = 0 \} = \{ c \in C(Y) \mid c(e(x)) = 0\} = M_{e(x)}.
\]
Therefore, $(e^\flat)_\flat \circ \eta_X = \xi_Y \circ e$. Then, by Lemma~\ref{lem: commute}, $\eta_X$ is continuous, and so $(\eta_X, \xi_Y)$
is a morphism in $\C$. Since $\eta_X$ is a bijection and $\xi_Y$ is a homeomorphism, applying Lemma~\ref{lem: commute} (with the pair
$\eta_X^{-1}$ and $\xi_Y^{-1}$), shows that $\eta_X$  is a homeomorphism. Thus, $(\eta_X, \xi_Y)$ is an isomorphism in $\C$.
\end{proof}

\begin{theorem} \label{duality cptf}
The functors $\sf E:\C \rightarrow \basic$ and $\sf C:\basic \rightarrow \C$ define a dual adjunction of categories that restricts to a
dual equivalence between $\C$ and $\ubasic$.
\end{theorem}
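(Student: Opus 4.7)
The plan is to package Lemmas~\ref{EC} and \ref{CE} into the unit and counit of a contravariant adjunction. Define $\varepsilon : 1_{\basic} \to {\sf E}{\sf C}$ componentwise by $\varepsilon_\alpha = (\zeta_A,\vartheta_B)$ for each basic extension $\alpha: A \to B$, and define $\eta : 1_{\C} \to {\sf C}{\sf E}$ componentwise by $\eta_e = (\eta_X,\xi_Y)$ for each compactification $e : X \to Y$. Lemmas~\ref{EC} and~\ref{CE} already tell us that both are well-defined morphisms in their respective categories.

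First I would verify naturality. For a morphism $(\rho,\sigma):\alpha\to\alpha'$ in $\basic$, naturality of $\varepsilon$ amounts to showing $\varepsilon_{\alpha'}\circ(\rho,\sigma) = {\sf E}{\sf C}(\rho,\sigma)\circ\varepsilon_\alpha$, which unpacks into the two commuting squares $(\rho_*)^*\circ\zeta_A = \zeta_{A'}\circ\rho$ and $(\sigma_+)^+\circ\vartheta_B = \vartheta_{B'}\circ\sigma$. The first is naturality of the Yosida representation (Gelfand-Naimark-Stone duality), and the second is naturality of $\vartheta$ already established in the proof of Theorem~\ref{thm:tarski}. For a morphism $(f,g):e\to e'$ in $\C$, naturality of $\eta$ reduces analogously to naturality of $\eta_X$ (proof of Theorem~\ref{thm:tarski}) paired with naturality of $\xi_Y$ (Gelfand-Naimark-Stone duality).

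Next I would verify the two triangle identities ${\sf E}(\eta_e)\circ\varepsilon_{{\sf E}(e)} = 1_{{\sf E}(e)}$ and ${\sf C}(\varepsilon_\alpha)\circ\eta_{{\sf C}(\alpha)} = 1_{{\sf C}(\alpha)}$. Expanding the first, this asks that $\xi_Y^*\circ\zeta_{C(Y)} = 1_{C(Y)}$ and $\eta_X^+\circ\vartheta_{B(X)} = 1_{B(X)}$, which are precisely the triangle identities for the Gelfand-Naimark-Stone and Tarski adjunctions respectively. The other triangle identity unpacks in the same way into the dual set of triangle identities for those two known adjunctions. This delivers the contravariant adjunction between $\C$ and $\basic$.

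Finally I would restrict. Proposition~\ref{prop: E} shows $\sf E$ lands in $\ubasic$, so ${\sf E}{\sf C}$ carries $\ubasic$ into $\ubasic$, and the adjunction restricts. By Lemma~\ref{CE}, every $\eta_e$ is already an isomorphism in $\C$, so ${\sf C}{\sf E}\cong 1_\C$. By Lemma~\ref{EC}, $\varepsilon_\alpha$ is an isomorphism whenever $A\in\ubal$, i.e.\ on all of $\ubasic$, so ${\sf E}{\sf C}\cong 1_{\ubasic}$. Hence the restricted adjunction is a dual equivalence. The main obstacle is purely bookkeeping: the triangle identities have to be unwound into four separate component equalities and then cross-referenced to the corresponding identities already available from Gelfand-Naimark-Stone duality and Theorem~\ref{thm:tarski}; there is no new analytic content beyond what is already packaged in Lemmas~\ref{EC} and~\ref{CE}.
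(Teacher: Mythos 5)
Your proposal is correct and follows essentially the same route as the paper: the unit and counit are exactly the transformations $(\zeta_A,\vartheta_B)$ and $(\eta_X,\xi_Y)$ from Lemmas~\ref{EC} and~\ref{CE}, naturality reduces to the same component squares (Gelfand-Naimark-Stone naturality plus the naturality of $\vartheta$ and $\eta$ from Theorem~\ref{thm:tarski}), and the restriction argument is identical. The one point where you go beyond the paper is the explicit verification of the two triangle identities (the paper establishes only the two natural transformations and asserts the adjunction); your reduction of these to the triangle identities of the Gelfand-Naimark-Stone and Tarski adjunctions is correct and tightens that step.
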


\begin{proof}
Propositions~\ref{prop: E} and \ref{prop: C} show that $\sf E$ and $\sf C$ are contravariant functors. We first show that $\sf CE$
is naturally isomorphic to the identity functor on $\C$. The functor $\sf E$ sends $e : X \to Y$ to $e^\flat : C(Y) \to B(X)$. Then $\sf C$
sends this to $(e^\flat)_\flat : X_{B(X)} \to Y_{C(Y)}$. By Lemma~\ref{CE}, $(\eta_X, \xi_Y)$ is an isomorphism in $\C$.
\[
\begin{tikzcd}[column sep=5pc]
X \arrow[r, "e"] \arrow[d, "\eta_X"'] & Y \arrow[d, "\xi_Y"] \\
X_{B(X)} \arrow[r, "(e^\flat)_\flat"'] & Y_{C(Y)}
\end{tikzcd}
\]

Let $(f,g)$ be be a morphism in $\C$.
Then ${\sf E}(f,g) = (g^*,  f^+)$, and so ${\sf CE}(f,g) = {\sf C}(g^*, f^+) = ((f^+)_+, (g^*)_*)$.
Thus, $((f^+)_+, (g^*)_*)$ is a morphism in $\C$.
\[
\begin{tikzcd}[column sep = 5pc]
X_{B(X)} \arrow[r, "(e^\flat)_\flat"] \arrow[d, "(f^+)_+"'] & Y_{C(Y)} \arrow[d, "(g^*)_*"] \\
X_{B(X')} \arrow[r, "((e')^\flat)_\flat"'] & Y_{C(Y')}
\end{tikzcd}
\]
We define a natural transformation $p$ from the identity functor on $\C$ to $\sf CE$ as follows. For a compactification $e : X \to Y$
we set $p_e = (\eta_X, \xi_Y)$. By Lemma~\ref{CE}, $p_e$ is an isomorphism in $\C$. We have the following diagram.
\[
\begin{tikzcd}[column sep = 5pc]
& X_{B(X)} \arrow[rr, "(e^\flat)_\flat"] \arrow[dd, near start ,"(f^+)_+"] && Y_{C(Y)} \arrow[dd,"(g^*)_*"] \\
X \ar[ru, "\eta_X"] \arrow[dd, "f"'] \arrow[rr, near end, "e"] && Y \arrow[dd, near start, "g"'] \arrow[ru, "\xi_Y"] & \\
& X_{B(X')} \arrow[rr, near start, "((e')^\flat)_\flat"'] && Y_{C(Y')} \\
X' \arrow[rr,  "e'"'] \arrow[ru, "\eta_{X'}"] && Y' \arrow[ru, "\xi_{Y'}"] &
\end{tikzcd}
\]
The front and back faces of this cube are commutative because $(f,g)$ and ${\sf CE}(f,g) = ((f^+)_+, (g^*)_*)$ are morphisms in $\C$.
The top and bottom faces are commutative since $p_e$ and $p_{e'}$ are morphisms in $\C$. The right face is commutative by
Gelfand-Naimark-Stone duality, and the left face is commutative by Theorem~\ref{thm:tarski}.
The commutativity of this cube shows that $p$ is a natural transformation. In fact, since $p_e$ is an isomorphism in $\C$,
we see that $p$ is a natural isomorphism.

We next define a natural transformation $q$ from the identity functor on $\basic$ to $\sf EC$. Given a basic extension $\alpha : A \to B$,
the functor $\sf C$ sends it to $\alpha_\flat : X_B \to Y_A$. This is then sent by $\sf E$ to $(\alpha_\flat)^\flat : C(Y_A) \to B(X_B)$.
\[
\begin{tikzcd}[column sep=5pc]
A \arrow[r, "\alpha"] \arrow[d, "\zeta_A"'] & B \arrow[d, "\vartheta_B"] \\
C(Y_A) \arrow[r, "(\alpha_\flat)^\flat"'] & B(X_B)
\end{tikzcd}
\]
The pair $(\zeta_A, \vartheta_B)$ is a morphism in $\basic$ by Lemma~\ref{EC}.
Define $q$ by setting $q_\alpha = (\zeta_A, \vartheta_B)$
for a basic extension $\alpha : A \to B$.
By Lemma~\ref{EC}, $q_\alpha$ is a morphism in $\basic$. To show naturality, let $(\rho, \sigma)$ be a morphism in $\basic$.
We have the following diagram.
\[
\begin{tikzcd}[column sep = 5pc]
& C(Y_A) \arrow[rr, "(\alpha_\flat)^\flat"] \arrow[dd, near start, "(\rho_*)^*"] && B(X_B) \arrow[dd, "(\sigma_+)^+"] \\
A \arrow[ru, "\zeta_A"] \arrow[dd, "\rho"'] \arrow[rr, near end, "\alpha"] && B \arrow[dd, near start, "\sigma"'] \arrow[ru, "\vartheta_B"] & \\
& C(Y_{A'}) \arrow[rr, near start, "((\alpha')_\flat)^\flat"'] && B(X_{B'}) \\
A' \arrow[rr, "\alpha'"'] \arrow[ru, "\zeta_{A'}"] && B' \arrow[ru, "\vartheta_{B'}"] &
\end{tikzcd}
\]
The front and back faces of this cube are commutative because $(\rho, \sigma)$ and ${\sf EC}(\rho, \sigma) = ((\rho_*)^*, (\sigma_+)^+)$
are morphisms in $\basic$. The top and bottom faces are commutative because $q_\alpha$ and $q_{\alpha'}$ are morphisms in $\basic$.
The left face is commutative by Gelfand-Naimark-Stone duality, and the right face is commutative by Theorem~\ref{thm:tarski}.
This shows that $q$ is a natural transformation. In addition, if $\alpha : A \to B$ is an object of $\ubasic$, then $q_\alpha$ is an
isomorphism by Lemma~\ref{EC}. Therefore, $\sf C$ and $\sf E$ yield a dual equivalence between $\C$ and $\ubasic$. This together with Theorem~\ref{prop: uE reflective} gives that $\sf E$ and $\sf C$ define a dual adjunction between $\C$ and $\basic$.
\end{proof}

We conclude this section by relating Theorem~\ref{duality cptf} to Gelfand-Naimark-Stone duality.

\begin{remark}
Let ${\sf I} : \KHaus \to \C$ be the functor sending a compact Hausdorff space $Y$ to the compactification $1_Y : Y \to Y$ where $1_Y$
is the identity map. Let also ${\sf G} : \bal \to \basic$ be the functor sending $A\in\bal$ to the basic extension $\vartheta_A:A \to B(Y_A)$
(see Example~\ref{ex: basic extensions}(3)). We have the following diagram.
\[
\begin{tikzcd}[column sep=5pc]
\bal \ar[r, shift left = .5ex, "Y"] \arrow[d, "\sf G"'] & \KHaus \ar[l, shift left = .5ex, "C"] \arrow[d, "\sf I"] \\
\basic \ar[r, shift left = .5ex, "\sf C"] & \C \arrow[l, shift left = .5ex, "\sf E"]
\end{tikzcd}
\]
Let $A \in \bal$. Then ${\sf I}(Y(A))$ is the compactification $Y_A \to Y_A$. On the other hand, ${\sf C}({\sf G}(A))$ is the
image under $\sf C$ of the basic extension $A \to B(Y_A)$, which is $X_{B(Y_A)} \to Y_A$, and this is naturally isomorphic to $Y_A \to Y_A$.
Therefore, ${\sf I}\circ Y$ and ${\sf C}\circ {\sf G}$ are naturally isomorphic.

Next, let $Y \in \KHaus$. Then ${\sf G}(C(Y))$ is the basic extension $C(Y) \to B(Y_{C(Y)})$. Also, ${\sf E}({\sf I}(Y))$ is the image under
$\sf E$ of the compactification $Y \to Y$, which is the extension $C(Y) \to B(Y)$. Since $Y$ and $Y_{C(Y)}$ are naturally homeomorphic,
${\sf G}\circ C$ and ${\sf E}\circ {\sf I}$ are naturally isomorphic. Consequently, the duality of Theorem~\ref{duality cptf} extends
Gelfand-Naimark-Stone duality.
\end{remark}

\section{Duality for completely regular spaces}

In this final section we show how to use Theorem~\ref{duality cptf} to derive duality for the category $\creg$ of completely regular spaces.
For this we will need to introduce the concept of a maximal basic extension and connect it to the Stone-\v{C}ech compactification.

Since the Stone-\v{C}ech compactification is the largest among compactifications of a given completely regular space, it is natural to define
a maximal basic extension as the largest with respect to the corresponding order among basic extensions $\alpha:A\to B$ that yield the same
completely regular topology on $X_B$. We call such basic extensions compatible. To give a purely algebraic description of compatibility
requires some preparation.

Let $e : X \to Y$ and $e' : X \to Y'$ be two compactifications of the same completely regular space $X$. Then we have two basic extensions
$e^\flat : C(Y) \to B(X)$ and $(e')^\flat : C(Y') \to B(X)$. While the images of $C(Y)$ and $C(Y')$ in $B(X)$ are in general different,
as we will see shortly,
they have isomorphic Dedekind completions.
For this we need to recall Dilworth's notion of a
normal lower semicontinuous function \cite{Dil50}.

Let $X$ be completely regular.
For $x \in X$ let $\mathcal{N}_x$ be the family of open neighborhoods of $x$. For $f \in B(X)$ set
\[
f^*(x) = \inf_{U \in \mathcal{N}_x} \sup_{y \in U} f(y) \quad \mbox{and} \quad f_*(x) = \sup_{U \in \mathcal{N}_x} \inf_{y \in U} f(y).
\]
We recall that $f$ is \emph{lower semicontinuous} if $f=f_*$, \emph{upper semicontinuous} if $f=f^*$, and \emph{normal $($lower semicontinuous$)$}
if $(f^*)_* = f$. We set $N(X) = \{ f\in B(X) \mid (f^*)_* = f\}$.

\begin{remark} \label{rem: D(A)}
\begin{enumerate}
\item[]
\item Dilworth \cite{Dil50} showed that if we
view $C^*(X)$ and $N(X)$ as lattices, then $N(X)$ is the Dedekind completion of $C^*(X)$.
\item D\u{a}ne\c{t} \cite{Dan15} showed that Dilworth's result generalizes to the setting of vector lattices, and hence $N(X)$ is the
Dedekind completion of $C^*(X)$ as a vector lattice.
\item Clearly $C^*(X) \in \bal$. It follows from \cite[Ex.~8.4(2)]{BMO16} that there is a multiplication on $N(X)$ extending the multiplication
on $C^*(X)$ such that $N(X) \in \bal$. Consequently, $N(X)$ is the Dedekind completion of $C^*(X)$ also as an $\ell$-algebra.\footnote{We point out that neither the lattice  nor the algebra operations on $N(X)$ are pointwise.}
\end{enumerate}
\end{remark}

\begin{lemma} \label{rem: lifting}
Let $X$ be a subspace of a topological space $Y$.  If $f$ is a bounded lower $($resp.~upper$)$ semicontinuous real-valued
function on $X$, then there is a bounded lower $($resp.~upper$)$ semicontinuous  real-valued function $g$ on $Y$ with $g|_X = f$.
\end{lemma}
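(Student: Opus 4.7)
The plan is to extend $f$ via the lower semicontinuous envelope and reduce the upper semicontinuous case to this one by replacing $f$ with $-f$.

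First I would define, for each $y \in Y$,
\[
g(y) \;=\; \sup\bigl\{\, \inf\{ f(x) : x \in U \cap X \} \,:\, U \text{ open in } Y,\, y \in U \,\bigr\}.
\]
Since $X$ is dense in $Y$, each $U \cap X$ is nonempty, so the inner infimum is a real number, and because $f$ is bounded the outer supremum is finite. Thus $g : Y \to \mathbb{R}$ is well defined and bounded by the same bounds as $f$.

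Next I would verify that $g$ is lower semicontinuous on $Y$ by a standard envelope argument: if $g(y_0) > r$, pick an open $U \ni y_0$ with $\inf_{x \in U \cap X} f(x) > r$; then for every $y \in U$ the set $U$ is an admissible neighborhood witnessing $g(y) \geq \inf_{x \in U \cap X} f(x) > r$, so $\{y : g(y) > r\}$ is open.

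The main step — and the one where the hypothesis on $f$ is used — is verifying $g|_X = f$. The inequality $g(x) \leq f(x)$ for $x \in X$ is immediate, since $x \in U \cap X$ gives $\inf_{x' \in U \cap X} f(x') \leq f(x)$. For the reverse inequality, fix $x \in X$ and $\varepsilon > 0$. Lower semicontinuity of $f$ on $X$ provides a set $V$ open in $X$ with $x \in V$ and $f(x') > f(x) - \varepsilon$ for all $x' \in V$; by definition of the subspace topology, $V = U \cap X$ for some $U$ open in $Y$ with $x \in U$. Then $\inf_{x' \in U \cap X} f(x') \geq f(x) - \varepsilon$, hence $g(x) \geq f(x) - \varepsilon$, and letting $\varepsilon \to 0$ yields $g(x) \geq f(x)$.

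For the upper semicontinuous case, I would simply observe that $f$ is upper semicontinuous on $X$ iff $-f$ is lower semicontinuous on $X$; apply the construction above to $-f$ to obtain a bounded lower semicontinuous extension $h$ on $Y$, and take $g = -h$, which is then a bounded upper semicontinuous extension of $f$. The main obstacle is the verification $g|_X = f$, since this is the only point where the semicontinuity of $f$ (as opposed to mere boundedness) enters; everything else is formal properties of suprema and infima over neighborhoods.
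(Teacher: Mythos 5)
Your proposal is correct and takes essentially the same approach as the paper: both construct the extension as the lower semicontinuous envelope $g(y)=\sup_{U\ni y}\inf_{z\in U\cap X}f(z)$ and handle the upper semicontinuous case by passing to $-f$. The only cosmetic differences are that the paper first extends $f$ to an auxiliary function $f'$ on $Y$ by its supremum before taking the envelope (which produces the same $g$, since density of $X$ makes the extra points irrelevant to the infima) and cites Dilworth for the lower semicontinuity of the envelope, which you verify directly.
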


\begin{proof}
Let $f$ be a bounded lower semicontinuous function on $X$. Then $s := \sup\{ f(x) \mid x \in X\}$ exists. We extend $f$ to a function $f'$
on $Y$ by setting $f'(y) = s$ for all $y \in Y \setminus X$. Then $f'$ is a bounded function on $Y$. We define $g$ on $Y$ by setting
\[
g(y) = \sup_{U \in \mathcal{N}_y} \inf_{z \in U} f'(z)
\]
for each $y \in Y$. Then $g$ is lower semicontinuous by \cite[Sec.~3]{Dil50}. Let $x \in X$. By the definition of $f'$, if
$U \in \mathcal{N}_x$, then $\inf \{ f'(y) \mid y \in U\} = \inf \{ f(z) \mid z \in U \cap X\}$. Therefore,
\[
g(x) = \sup_{U \in \mathcal{N}_x} \inf_{y \in U} f'(y) = \sup_{U \in \mathcal{N}_x} \inf_{z \in U \cap X} f(z).
\]
Because $X$ is a subspace of $Y$, we see that $\{ U \cap X \mid U \in \mathcal{N}_x \}$ is the collection of open neighborhoods of
$x$ in $X$. Thus, since $f$ is lower semicontinuous on $X$,
\[
f(x) = \sup_{U \in \mathcal{N}_x} \inf_{z \in U \cap X} f(z) = g(x)
\]
for each $x \in X$, and hence $g|_X = f$. The argument for upper semicontinuous functions is similar and left to the reader.
\end{proof}

\begin{lemma} \label{lem: pointwise join-meet}
Let $\alpha : A \to B$ be a basic extension and let $f \in C^*(X_B)$. Then $f$ is a pointwise join and meet of elements from
$\alpha_\flat^*\zeta_A[A]$.
\[
\begin{tikzcd}
A \arrow[rr, "\alpha"] \arrow[d, "\zeta_A"'] && B \arrow[d, "\vartheta_B"] \\
C(Y_A) \arrow[rr, "(\alpha_\flat)^\flat"] \arrow[rd, "\alpha_\flat^*"'] && B(X_B) \\
& C^*(X_B) \arrow[ru, "\iota"'] &
\end{tikzcd}
\]
\end{lemma}

\begin{proof}
Since $f$ is continuous, by Lemma~\ref{rem: lifting} there is a lower semicontinuous function $g \in B(Y_A)$ and an upper semicontinuous function
$h \in B(Y_A)$ with $g \circ \alpha_\flat = f = h \circ \alpha_\flat$.
By \cite[Lem.~4.1]{Dil50} (and its dual for lower semicontinuous functions), $g = \bigvee S$ is a pointwise join of elements from $C(Y_A)$
and $h$ is a pointwise
meet of elements from $C(Y_A)$. Because the map $(\alpha_\flat)^+ : B(Y_A) \to B(X_B)$ sending $f$ to $f\circ \alpha_\flat$ is a normal
homomorphism,
\[
f = g \circ \alpha_\flat = \alpha_\flat^+(g)  = \bigvee \{ \alpha_\flat^+(k)  \mid k \in S\} = \bigvee \{ \alpha_\flat^*(k)  \mid k \in S\}
\]
is a join of elements from $(\alpha_\flat)^*[C(Y_A)]$. Similarly, $f$ is a meet of elements from $\alpha_\flat^*[C(Y_A)]$.
Thus, $f$ is both a pointwise join and meet from $\alpha_\flat^*[C(Y_A)]$.
The argument of \cite[Lem.~2.8]{BMO18c} shows that each element of $C(Y_A)$ is both a pointwise join and meet from $\zeta_A[A]$.
Each $k \in S$ then can be written, in $B(Y_A)$, as $k = \bigvee \{ \zeta_A(a) \mid a \in T_k\}$ for some $T_k \subseteq A$. Set
$T = \bigcup \{ T_k \mid k \in S \}$. We have
\begin{align*}
f &= \bigvee \{ \alpha_\flat^*(k) \mid k \in S\} = \bigvee \{ \alpha_\flat^*\left(\bigvee \zeta_A(a)) \mid a \in T_k\}\right) \mid k \in S \} \\
&= \bigvee \{ \alpha_\flat^*(\zeta_A(a))  \mid a \in T \}.
\end{align*}
Therefore,
$f$ is a pointwise join from $\alpha_\flat^*\zeta_A[A]$. Repeating the argument above but replacing $g$ by $h$ and joins with meets shows that
$f$ is a pointwise meet from $\alpha_\flat^*\zeta_A[A]$.
\end{proof}

As an immediate consequence of Lemma~\ref{lem: pointwise join-meet} we obtain:

\begin{lemma} \label{cor: pointwise join-meet}
Let $e : X \to Y$ be a compactification. Then each $f \in C^*(X)$ is a pointwise join and
meet from $e^*[C(Y)]$.
\end{lemma}

\begin{lemma}
If $e : X \to Y$ is a compactification, then $N(X)$ is isomorphic to $N(Y)$ in $\bal$.
\end{lemma}

\begin{proof}
It follows from Lemma~\ref{cor: pointwise join-meet} that $e^*[C(Y)]$ is join-dense and meet-dense in $C^*(X)$.
Consequently, $C(Y)$ and $C^*(X)$ have isomorphic Dedekind completions. Thus, by Remark~\ref{rem: D(A)}(3),
$N(Y)$ and $N(X)$ are isomorphic in $\bal$.
\end{proof}

\begin{remark}
In fact, the restriction of $e^+:B(Y)\to B(X)$ to $N(Y)$ is a well-defined isomorphism of $N(Y)$ and $N(X)$.
Since we do not require this fact in what follows, we omit the proof.
\end{remark}

Let $e : X \to Y$ be a compactification and $\alpha:=e^\flat:C(Y)\to B(X)$ the corresponding basic extension.
Using \cite[Lem.~4.1]{Dil50} as motivation, we define
$u_\alpha,l_\alpha:B(X)\to B(X)$ as follows. For $f \in B(X)$ let
\[
u_\alpha(f) = \bigwedge \{ \alpha(g) \mid g \in C(Y), f \le \alpha(g)\} \quad \mbox{and} \quad
l_\alpha(f) = \bigvee \{ \alpha(g) \mid g \in C(Y), \alpha(g) \le f\}.
\]
We set
\[
N_\alpha(X) = \{ f\in B(X) \mid l_\alpha u_\alpha(f) = f\}.
\]

\begin{lemma} \label{lem: N_alpha(X)}
Let $e : X \to Y$ be a compactification and $\alpha = e^\flat : C(Y) \to B(X)$ the corresponding basic extension. Then $N_\alpha(X) = N(X)$.
\end{lemma}

\begin{proof}
Let $f \in B(X)$. By \cite[Lem.~4.1]{Dil50}, $f^* = \bigwedge \{ g \in C^*(X) \mid f \le g\}$. By Lemma~\ref{cor: pointwise join-meet}, each
$g\in C^*(X)$ is a pointwise meet from $\alpha[C(Y)]$. Thus, $f^* = u_\alpha(f)$.
A similar argument yields that $f_* = l_\alpha(f)$. Thus, $(f^*)_* = l_\alpha u_\alpha(f)$. From this and the definitions of $N_\alpha(X)$
and $N(X)$ it follows that $N_\alpha(X) = N(X)$.
\end{proof}

This motivates the following definition.

\begin{definition}
Let $\alpha : A \to B$ be a basic extension.
\begin{enumerate}
\item For $b \in B$ set
\[
u_\alpha(b) = \bigwedge \{ \alpha(a) \mid a \in A, b \le \alpha(a) \} \quad \mbox{and} \quad
l_\alpha(b) = \bigvee \{ \alpha(a) \mid a \in A, \alpha(a) \le b\}.
\]
\item Let $N_\alpha = \{ b \in B \mid l_\alpha u_\alpha(b) = b \}$.
\end{enumerate}
\end{definition}

We are ready to define when two basic basic extensions are compatible.

\begin{definition}
We call two basic extensions $\alpha : A \to B$ and $\gamma : C \to B$ \emph{compatible} if $N_\alpha = N_\gamma$.
\end{definition}

\begin{lemma}
\begin{enumerate}
\item[]
\item If $\alpha : A \to B$ is a basic extension, then $\vartheta_B(N_\alpha) = N(X_B)$.
\item Two basic extensions $\alpha : A \to B$ and $\gamma : C \to B$ are compatible iff $\tau_\alpha = \tau_\gamma$.
\end{enumerate}
\end{lemma}

\begin{proof}
(1). By Lemma~\ref{lem: N_alpha(X)}, it suffices to show that $\vartheta_B(N_\alpha) = N_\alpha(X_B)$. We first show that
$\vartheta_B(u_\alpha(b)) = u_\alpha(\vartheta_B(b))$ for each $b \in B$. To see this, since $\vartheta_B$ is an isomorphism
and $\vartheta_B\alpha(a) = \alpha_\flat^*\zeta_A(a)$,
\[
\vartheta_B(u_\alpha(b)) = \bigwedge \{ \vartheta_B(\alpha(a)) \mid a \in A, b \le \alpha(a) \} =
\bigwedge \{ \alpha_\flat^*\zeta_A(a) \mid a \in A, b \le \alpha(a) \}.
\]
On the other hand, $u_\alpha(\vartheta_B(b)) = \bigwedge \{ \alpha_\flat^*(g) \mid g \in C(Y_A), \vartheta_B(b) \le g\}$.
By \cite[Lem.~2.8]{BMO18c}, each $g \in C(Y_A)$ is a pointwise meet from $\zeta_A[A]$. Consequently,
\[
u_\alpha(\vartheta_B(b)) = \bigwedge \{ \alpha_\flat^*\zeta_A(a) \mid a \in A, \vartheta_B(b) \le \alpha_\flat^*\zeta_A(a)\}.
\]
Since $\vartheta_B\alpha(a) = \alpha_\flat^*\zeta_A(a)$ and $\vartheta_B$ is an isomorphism, $b \le \alpha(a)$ iff
$\vartheta_B(b) \le \alpha_\flat^*\zeta_A(a)$. Thus, $\vartheta_B(u_\alpha(b)) = u_\alpha(\vartheta_B(b))$. Similarly,
$\vartheta_B(l_\alpha(b)) = l_\alpha(\vartheta_B(b))$. From this it follows that $\vartheta_B(N_\alpha) = N_\alpha(X_B)$.

(2). Because we are working with two topologies, to avoid confusion, we write $N(X_B, \tau_\alpha)$ and $N(X_B, \tau_\gamma)$.
First suppose that $\tau_\alpha = \tau_\gamma$. Then $N(X_B, \tau_\alpha) = N(X_B, \tau_\gamma)$. Therefore,
by (1), $\vartheta_B(N_\alpha) = \vartheta_B(N_\gamma)$. Since $\vartheta_B$ is 1-1, we conclude that $N_\alpha = N_\gamma$.

Conversely, suppose that $N_\alpha = N_\gamma$. Then $\vartheta_B(N_\alpha) = \vartheta_B(N_\gamma)$, and so
$N(X_B, \tau_\alpha) = N(X_B, \tau_\gamma)$ by (1). To show that $\tau_\alpha = \tau_\gamma$,
it suffices to show that $U \subseteq X_B$ is regular open in $\tau_\alpha$ iff it is regular open in $\tau_\gamma$. Now, $U$ is regular open
in $\tau_\alpha$ iff the characteristic function $\chi_U \in N(X_B, \tau_\alpha)$ (see, e.g., \cite[Ex.~4.11]{BMO16}). The corresponding
statement for $\tau_\gamma$ holds for the same reason. Since $N(X_B, \tau_\alpha) = N(X_B, \tau_\gamma)$, we see that $U$ is regular open in
$\tau_\alpha$ iff $U$ is regular open in $\tau_\gamma$. Thus, $\tau_\alpha = \tau_\gamma$.
\end{proof}

We are now ready to define the notion of a maximal basic extension.

\begin{definition}
\begin{enumerate}
\item[]
\item A basic extension $\alpha : A \to B$ is \emph{maximal} provided that for every compatible extension $\gamma : C \to B$, there is a
morphism $\delta : C \to A$ in $\bal$ such that $\alpha \circ \delta = \gamma$.
\[
\begin{tikzcd}
A \arrow[rr, "\alpha"] && B \\
& C \arrow[ul, "\delta"] \arrow[ur, "\gamma"']
\end{tikzcd}
\]
\item Let $\mbasic$ be the full subcategory of $\basic$ consisting of maximal basic extensions.
\end{enumerate}
\end{definition}

We next give different characterizations of maximal basic extensions.
Let $\alpha : A \to B$ be a basic extension. Then $\alpha_\flat : X_B \to Y_A$ is a continuous map, and so we have a morphism
$(\alpha_\flat)^* : C(Y_A) \to C^*(X_B)$ in $\bal$.

\begin{definition}
Define $\mu:A \to C^*(X_B)$ as the composition $\mu = (\alpha_\flat)^* \circ \zeta_A$.
\end{definition}

Since both $\zeta_A$ and $(\alpha_\flat)^* $ are morphisms in $\bal$, so is $\mu$. In fact,
$\mu$ is a monomorphism in $\bal$.
To see this, note that $\zeta_A$ is 1-1. We show that
$(\alpha_\flat)^*$ is 1-1. If $(\alpha_\flat)^*(f) = 0$ for $f \in C(Y_A)$, then $f \circ \alpha_\flat = 0$.
Therefore, $f|_{\alpha_\flat[X_B]} = 0$. Since $\alpha_\flat[X_B]$ is dense in $Y_A$, we have $f = 0$. Thus,
$(\alpha_\flat)^*$ is 1-1, and so $\mu$ is 1-1.

Let $\iota :  C^*(X_B) \to B(X_B)$ be the
inclusion morphism. The following diagram commutes because the top half commutes by Gelfand-Naimark-Stone duality and the bottom half
commutes by application of the relevant definitions.
\[
\begin{tikzcd}[column sep = 5pc]
A \arrow[r, "\alpha"] \arrow[d, "\zeta_A"] \arrow[dd, bend right = 40, "\mu"'] & B \arrow[d, "\zeta_B"'] \arrow[dd, bend left = 40, "\vartheta_B"] \\
C(Y_A) \arrow[r, "(\alpha_*)^*"] \arrow[d, "(\alpha_\flat)^*"] & C(Y_B) \arrow[d, "\kappa_B"'] \\
C^*(X_B) \arrow[r, "\iota"'] & B(X_B)
\end{tikzcd}
\]

\begin{proposition} \label{maximal}
The following are equivalent for a basic extension $\alpha : A \to B$.
\begin{enumerate}
\item $\alpha$ is maximal.
\item $\mu = (\alpha_\flat)^*\circ\zeta_A : A \rightarrow C^*(X_B)$ is an isomorphism.
\item $A$ is uniformly complete and  $\alpha_\flat : X_B \to Y_A$ is isomorphic to the Stone-\v{C}ech compactification $s : X_B \to \beta X_B$.
\item $A$ is uniformly complete and $\alpha_\flat : X_B \to Y_A$ is equivalent to $s$.
\item The only elements of $B$ that are both a join and meet of elements from $\alpha[A]$ are those that are in $\alpha[A]$.
\end{enumerate}
\end{proposition}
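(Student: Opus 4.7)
My plan is to prove the equivalences in three bundles: (2)$\Leftrightarrow$(3) and (2)$\Leftrightarrow$(4) via Gelfand-Naimark-Stone duality, (1)$\Leftrightarrow$(2) via a canonical compatible extension through $C^*(X_B)$, and (2)$\Leftrightarrow$(5) using Lemma~\ref{rem: lifting} together with a Stone-Weierstrass approximation.

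For the first bundle, if $\mu$ is an isomorphism then $A\cong C^*(X_B)\in\ubal$ and its Yosida dual $\mu_*$ gives a homeomorphism $\beta X_B = Y_{C^*(X_B)}\to Y_A$ intertwining $s$ with $\alpha_\flat$, which yields both (3) and (4). Conversely, if $A$ is uniformly complete then $\zeta_A$ is an isomorphism, and if in addition $\alpha_\flat$ is equivalent (or isomorphic) to $s$, then $(\alpha_\flat)^*:C(Y_A)\to C^*(X_B)$ is an isomorphism, hence so is $\mu = (\alpha_\flat)^*\circ\zeta_A$.

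For (1)$\Leftrightarrow$(2), I would introduce the canonical candidate $\gamma := \vartheta_B^{-1}\circ\iota:C^*(X_B)\to B$, where $\iota:C^*(X_B)\hookrightarrow B(X_B)$ is the inclusion. This $\gamma$ is a basic extension because its image contains $\alpha[A]$, and it is compatible with $\alpha$ because $\gamma_\flat$ is the Stone-\v{C}ech embedding of $(X_B,\tau_\alpha)$, whose induced topology on $X_B$ is again $\tau_\alpha$. For (1)$\Rightarrow$(2), maximality applied to $\gamma$ produces $\delta:C^*(X_B)\to A$ with $\alpha\circ\delta = \gamma$, and the injectivity of $\alpha$ and $\iota$ forces $\delta$ to be a two-sided inverse of $\mu$. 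For (2)$\Rightarrow$(1), any compatible $\gamma':C\to B$ factors as $\vartheta_B^{-1}\circ\iota\circ\mu_{\gamma'}$ by the same analysis, so $\delta := \mu^{-1}\circ\mu_{\gamma'}$ witnesses maximality.

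The technical heart is (2)$\Leftrightarrow$(5). The direction (2)$\Rightarrow$(5) is immediate: under (2), any element of $B\cong B(X_B)$ that is both a join and a meet of elements of $\mu[A]$ is a bounded function on $X_B$ that is both lower and upper semicontinuous for $\tau_\alpha$, hence continuous, and therefore lies in $C^*(X_B)=\mu[A]$. For (5)$\Rightarrow$(2), I take $g\in C^*(X_B)$ and extend it via Lemma~\ref{rem: lifting} to a bounded lower semicontinuous $g_1$ on the compact Hausdorff space $Y_A$; classically, $g_1$ is the pointwise supremum of the continuous functions it dominates, and Stone-Weierstrass density of $\zeta_A[A]$ in $C(Y_A)$ lets me replace those continuous functions by elements of $\zeta_A[A]$ after a small uniform shift. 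Restricting back to $X_B$ exhibits $g$ as a pointwise join, and dually a pointwise meet, of elements of $\mu[A]$ in $B(X_B)$. Transporting through $\vartheta_B^{-1}$, condition (5) then places $\vartheta_B^{-1}(\iota(g))$ in $\alpha[A]$, so $g\in\mu[A]$ and $\mu$ is surjective. I expect this last argument to be the main obstacle: the semicontinuity machinery naturally produces elements of $(\alpha_\flat)^*[C(Y_A)]$, and it is precisely the Stone-Weierstrass step that closes the gap back to $\mu[A]$ without presupposing uniform completeness of $A$.
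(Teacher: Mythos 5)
Your proposal is correct, and it relies on the same essential toolkit as the paper's proof: the monomorphism $\mu=(\alpha_\flat)^*\circ\zeta_A$, the canonical compatible extension $\gamma=\vartheta_B^{-1}\circ\iota$ through $C^*(X_B)$ (with compatibility coming from Example~\ref{ex: 5.5}), Lemma~\ref{rem: lifting}, and the semicontinuity characterization of joins and meets from a ring of continuous functions. The difference is organizational but real: you use (2) as a hub, whereas the paper runs the cycle $(1)\Rightarrow(2)\Rightarrow(3)\Rightarrow(4)\Rightarrow(1)$ together with $(4)\Rightarrow(5)\Rightarrow(2)$. Two of your sub-arguments genuinely diverge. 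First, your direct $(2)\Rightarrow(1)$ factors an arbitrary compatible $\gamma':C\to B$ as $\vartheta_B^{-1}\circ\iota\circ\mu_{\gamma'}$ (using that compatibility makes $C^*(X_B,\tau_{\gamma'})=C^*(X_B,\tau_\alpha)$, so the same inclusion $\iota$ appears) and sets $\delta=\mu^{-1}\circ\mu_{\gamma'}$; the paper instead proves $(4)\Rightarrow(1)$ by invoking the universal property of $\beta X_B$ to obtain a continuous $\varphi:Y_A\to Y_C$ and taking $\delta=\zeta_A^{-1}\circ\varphi^*\circ\zeta_C$. Your route is arguably cleaner, stays entirely inside the commutative diagram preceding the proposition, and also lets you treat (3) and (4) symmetrically through (2), so you avoid the paper's citation of an external result for $(3)\Rightarrow(4)$. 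Second, in $(5)\Rightarrow(2)$ you close the gap from $(\alpha_\flat)^*[C(Y_A)]$ down to $\mu[A]$ by Stone--Weierstrass density of $\zeta_A[A]$ in $C(Y_A)$ plus a uniform shift by a scalar (legitimate, since $\zeta_A(a)-\varepsilon=\zeta_A(a-\varepsilon\cdot 1)$), where the paper cites the fact that each element of $C(Y_A)$ is a pointwise join and meet from $\zeta_A[A]$; these are the same fact, and your remark that this is precisely the step that avoids presupposing uniform completeness of $A$ matches the paper's use of it exactly.
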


\begin{proof}
(1) $\Rightarrow$ (2). Since $\ubal$ is a reflective subcategory of $\bal$, there is a monomorphism $\widehat{\alpha} : C(Y_A) \to B$ in
$\bal$ with $\widehat{\alpha} \circ \zeta_A = \alpha$. As we pointed out in the proof of Theorem~\ref{prop: uE reflective},
$\widehat{\alpha} : C(Y_A) \to B$ is a basic extension. Since $\vartheta_B$ is an isomorphism (see Proposition~\ref{basic char}),
we may define $\gamma = \vartheta_B^{-1}\circ \iota$. By Example~\ref{ex: basic extensions}(2), $\iota$ is a basic extension.
Thus, $\gamma$ is a basic extension. By Example~\ref{ex: 5.5}, $\tau_\gamma$ is equal to $\tau_\alpha$, and so $\gamma$ is compatible
with $\alpha$. By (1), there is a morphism $\delta : C^*(X_B) \to A$ in $\bal$ such that $\alpha\circ\delta = \gamma$.
\[
\begin{tikzcd}
A \arrow[dd,  "\mu"'] \arrow[rr, "\alpha"] \arrow[rd, "\zeta_A"] && B \arrow[dd, "\vartheta_B"] \\
& C(Y_A) \arrow[ru, "\widehat{\alpha}"] \arrow[ld, "(\alpha_\flat)^*"'] & \\
C^*(X_B) \arrow[uu, bend left = 40,  "\delta"] \arrow[rr, "\iota"'] \arrow[rruu, bend right = 20, "\gamma"'] && B(X_B)
\end{tikzcd}
\]
As we pointed out before the proposition, $\vartheta_B \circ \alpha = \iota \circ \mu$. Therefore,
\[
\iota\circ \mu \circ \delta = \vartheta_B \circ \alpha \circ \delta = \vartheta_B \circ \gamma = \iota.
\]
Since $\iota$ is monic, $\mu\circ \delta$ is the identity on $C^*(X_B)$.
This implies $\mu$ is onto. Because $\mu$ is 1-1, we conclude that $\mu$ is an isomorphism.

(2) $\Rightarrow$ (3). In light of (2), it is clear that $A$ is uniformly complete. Let $f \in C(Y_A)$.
Since the diagram above is commutative,
$(\mu, \vartheta_B)$ is a morphism in $\basic$. Because both $\mu$ and $\vartheta_B$ are isomorphisms, $(\mu, \vartheta_B)$
is an isomorphism in $\basic$. Applying $\sf C$ yields $\alpha_\flat$ and $\iota_\flat$ are isomorphic in $\C$. Therefore,
by Example~\ref{ex: 5.5}, $\alpha_\flat$ and $s$ are isomorphic in $\C$.

(3) $\Rightarrow$ (4). It is proved in \cite[Thm.~3.3]{BMO19a} that if a compactification $e:X\to Y$ is isomorphic to the
Stone-\v{C}ech compactification $s:X\to\beta X$, then $e$ is equivalent to $s$.

(4) $\Rightarrow$ (1). Let $\gamma : C \to B$ be compatible with $\alpha$. Then $\gamma_\flat : X_B \to Y_C$ and $\alpha_\flat : X_B \to Y_A$
are compactifications of the same topological space. By (4), $Y_A$ is homeomorphic to $\beta X_B$, so there is a continuous map
$\varphi : Y_A \to Y_C$ with $\varphi \circ \alpha_\flat = \gamma_\flat$.
\[
\begin{tikzcd}[column sep=5pc]
X_B \arrow[r, "\alpha_\flat"] \arrow[rd, "\gamma_\flat"'] & Y_A \arrow[d, "\varphi"] \\
& Y_C
\end{tikzcd}
\]
This implies that $(\alpha_\flat)^\flat \circ \varphi^* = (\gamma_\flat)^\flat$ since if $f \in C(Y_C)$, then
\[
[(\alpha_\flat)^\flat \circ \varphi^*](f) = f\circ \varphi \circ \alpha_\flat = f \circ \gamma_\flat = (\gamma_\flat)^\flat(f).
\]
Define $\delta = \zeta_A^{-1} \circ \varphi^* \circ \zeta_C$. We have the following diagram.
\[
\begin{tikzcd}[column sep = .1pc]
A \arrow[rr, "\alpha"] && B \\
C(Y_A) \arrow[u, "\zeta_A^{-1}"'] \arrow[rr, "(\alpha_\flat)^\flat"] && B(X_B) \arrow[u, "\vartheta_B^{-1}"] \\
& C(Y_C) \arrow[ul, "\varphi^*"] \arrow[ru, "(\gamma_\flat)^\flat"'] & \\
&& \\
& C \arrow[uu, "\zeta_C"] \arrow[uuuul, bend left = 60, "\delta"] \arrow[uuuur, bend right = 60, "\gamma"'] &
\end{tikzcd}
\]
We just observed that the middle triangle commutes, and the top square commutes by Lemma~\ref{EC}. Another application of Lemma~\ref{EC}
yields that $\gamma = \vartheta_B^{-1} \circ (\gamma_\flat)^\flat \circ \gamma_C$. Thus, $\alpha \circ \delta = \gamma$, which proves
that $\alpha$ is maximal.

(4) $\Rightarrow$ (5). By (4) we may assume $\alpha$ is the basic extension $\iota : C^*(X_B) \to B(X_B)$. Then
$\alpha[A] = C^*(X_B)$. If $b \in B(X_B)$ is a meet from $C^*(X_B)$, then it is upper semicontinuous by \cite[Lem.~4.1]{Dil50},
and if it is a join from $C^*(X_B)$, then it is lower semicontinuous by the dual of \cite[Lem.~4.1]{Dil50}. Therefore, if $b$ is
both a join and meet from $C^*(X_B)$, then $b$ is continuous, so $b \in C^*(X_B) = \alpha[A]$.

(5) $\Rightarrow$ (2).
Let $f \in C^*(X_B)$. By Lemma~\ref{lem: pointwise join-meet}, $f$ is both a pointwise join and a meet from elements
of $(\alpha_\flat)^*[\zeta_A[A]]$.
By (5), $\vartheta_B^{-1}(f) \in \alpha[A]$, so $f \in \vartheta_B\alpha[A] = \mu[A]$. Thus, $\mu$ is onto.
Since it is 1-1, we conclude that $\mu$ is an isomorphism.
\end{proof}

As a consequence, we obtain that $\mbasic$ is a full subcategory of $\ubasic$.

\begin{definition}
Let $\SC$ be the full subcategory of $\C$ consisting of Stone-\v{C}ech compactifications.
\end{definition}

Theorem~\ref{duality cptf} and Proposition~\ref{maximal} immediately yield the following:

\begin{theorem}
There is a dual equivalence between $\SC$ and $\mbasic$.
\end{theorem}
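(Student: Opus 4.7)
The plan is to show that the dual equivalence between $\C$ and $\ubasic$ established in Theorem~\ref{duality cptf} restricts to a dual equivalence between the full subcategories $\SC$ and $\mbasic$. Since these are both full subcategories, and since $\mbasic \subseteq \ubasic$ already by Proposition~\ref{maximal}, it suffices to verify that the functors $\sf E$ and $\sf C$ send the objects of one subcategory into the other; the natural isomorphisms $p$ and $q$ built in Theorem~\ref{duality cptf} will then automatically restrict to the requisite natural isomorphisms.

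For the direction $\sf C:\mbasic \to \SC$, the argument is immediate from Proposition~\ref{maximal}. Given a maximal basic extension $\alpha:A\to B$, condition (4) of that proposition asserts that ${\sf C}(\alpha) = \alpha_\flat:X_B \to Y_A$ is equivalent to the Stone-\v{C}ech compactification $s:X_B \to \beta X_B$, and hence lies in $\SC$.

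For the direction $\sf E:\SC \to \mbasic$, take a Stone-\v{C}ech compactification $s:X \to \beta X$ and consider ${\sf E}(s) = s^\flat:C(\beta X)\to B(X)$. Since $\beta X \in \KHaus$, we have $C(\beta X)\in\ubal$. To apply Proposition~\ref{maximal}(3), we must identify $(s^\flat)_\flat : X_{B(X)} \to Y_{C(\beta X)}$ with the Stone-\v{C}ech compactification of $X_{B(X)}$ up to isomorphism in $\C$. Lemma~\ref{CE} already furnishes an isomorphism $(\eta_X, \xi_{\beta X})$ in $\C$ between $s$ and $(s^\flat)_\flat$, and since $\eta_X$ is a homeomorphism, transporting the universal property of $s$ along $\eta_X$ exhibits $(s^\flat)_\flat$ as a Stone-\v{C}ech compactification of $X_{B(X)}$. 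This verifies condition (3), and hence $s^\flat \in \mbasic$.

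The only potential subtlety lies in the second direction, namely in the passage from \textquotedblleft $s$ is Stone-\v{C}ech for $X$\textquotedblright\ to \textquotedblleft $(s^\flat)_\flat$ is Stone-\v{C}ech for $X_{B(X)}$\textquotedblright; but this is straightforward because $\SC$ is defined as a full subcategory of $\C$ and the universal property of the Stone-\v{C}ech compactification is preserved under isomorphism in $\C$. With both restrictions in place, the dual equivalence of Theorem~\ref{duality cptf} immediately descends to the announced dual equivalence between $\SC$ and $\mbasic$.
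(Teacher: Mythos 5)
Your proof is correct and takes essentially the same route as the paper, which obtains the theorem as an immediate restriction of the dual equivalence of Theorem~\ref{duality cptf} via Proposition~\ref{maximal}. Your explicit check that ${\sf C}$ sends maximal basic extensions into $\SC$ (via condition (4)) and that ${\sf E}$ sends Stone-\v{C}ech compactifications into $\mbasic$ (via condition (3) and Lemma~\ref{CE}) is exactly the detail the paper leaves implicit.
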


It is well known that $\creg$ and $\SC$ are equivalent (see, e.g., \cite[Prop.~6.8]{BMO19a}).
Thus, as an immediate consequence we obtain:

\begin{theorem} \label{CR duality}
There is a dual equivalence between $\creg$ and $\mbasic$.
\end{theorem}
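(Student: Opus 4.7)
The plan is to obtain Theorem~\ref{CR duality} as a direct composition of two equivalences already available. The (unlabelled) theorem immediately above asserts that $\sf E$ and $\sf C$ restrict to a dual equivalence between $\SC$ and $\mbasic$, and the sentence preceding the statement records the well-known fact from \cite[Prop.~6.8]{BMO18a} that $\creg$ is equivalent to $\SC$. Composing these two equivalences produces the desired dual equivalence between $\creg$ and $\mbasic$.

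Concretely, I would describe the composite functor $\creg \to \mbasic$ as sending a completely regular space $X$ to the maximal basic extension $s_X^\flat : C(\beta X) \to B(X)$, where $s_X : X \to \beta X$ is the Stone-\v{C}ech compactification, and sending a continuous map $f : X \to X'$ to the pair $((\beta f)^*, f^+)$, where $\beta f : \beta X \to \beta X'$ is the unique continuous extension of $s_{X'} \circ f$. The first leg of this composition is the equivalence $\creg \to \SC$ from \cite[Prop.~6.8]{BMO18a}, and the second is the restriction of $\sf E$ to $\SC$, which lands in $\mbasic$ by Proposition~\ref{maximal}(3). The reverse composite $\mbasic \to \creg$ sends $\alpha : A \to B$ to the completely regular space $(X_B, \tau_\alpha)$, and a morphism $(\rho,\sigma)$ to $\sigma_+$; the fact that $(X_B,\tau_\alpha)$ is genuinely completely regular (and not just embedded in $Y_A$ as in Theorem~\ref{lem:compactification}) is again guaranteed by Proposition~\ref{maximal}(3), which identifies $\alpha_\flat : X_B \to Y_A$ with the Stone-\v{C}ech compactification of $X_B$.

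The only verification needed is that the natural isomorphisms $p_e = (\eta_X, \xi_Y)$ and $q_\alpha = (\zeta_A, \vartheta_B)$ from the proof of Theorem~\ref{duality cptf} survive further restriction and combine with the equivalence $\creg \simeq \SC$. Since $\mbasic$ is a full subcategory of $\ubasic$, the isomorphism $q_\alpha$ is automatically available on $\mbasic$. On the other side, Example~\ref{ex: 5.5} already exhibits, for each completely regular $X$, the homeomorphism $\eta_X : X \to X_{B(X)}$ that serves as the component of the natural isomorphism between $1_{\creg}$ and the composite $\mbasic \to \creg \to \mbasic$ transported along $\creg \simeq \SC$. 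There is no serious obstacle here; the result is essentially a bookkeeping corollary of the two preceding equivalences, with all required coherences following from the universal property of the Stone-\v{C}ech compactification together with the commuting cubes already established in the proof of Theorem~\ref{duality cptf}.
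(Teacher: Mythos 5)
Your proposal is correct and follows the paper's own route exactly: the paper also obtains Theorem~\ref{CR duality} by composing the dual equivalence between $\SC$ and $\mbasic$ with the equivalence $\creg\simeq\SC$ from \cite[Prop.~6.8]{BMO18a}, and its subsequent remark spells out the same composite functors you describe (with $\iota : C^*(X)\to B(X)$ in place of the isomorphic $s_X^\flat : C(\beta X)\to B(X)$, cf.\ Example~\ref{ex: basic extensions}(2)).
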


\begin{remark}
To describe the functors yielding the dual equivalence of Theorem~\ref{CR duality}, we recall that the equivalence between $\creg$ and $\SC$
is obtained by the functors ${\sf S} : \creg \to \SC$ and ${\sf F} : \SC \to \creg$. The covariant functor $\sf S$ sends a completely regular
space $X$ to the Stone-\v{C}ech compactification $s : X \to \beta X$ and a continuous map $f : X \to Y$ to the unique continuous map
$\beta f : \beta X \to \beta Y$ that makes the following diagram commute.
\[
\begin{tikzcd}[column sep=5pc]
X \arrow[r, "f"] \arrow[d, "s_X"'] & Y \arrow[d, "s_Y"] \\
\beta X \arrow[r, "\beta f"'] & \beta Y
\end{tikzcd}
\]
The covariant functor $\sf F$ sends a Stone-\v{C}ech compactification $s : X \to \beta X$ to $X$, and a morphism $(f, \beta f)$ to $f$.
The dual equivalence of Theorem~\ref{CR duality} is obtained by the contravariant functors ${\sf E} \circ {\sf S}$ and ${\sf F} \circ {\sf C}$.
We give a more direct description of the contravariant functors between $\creg$ and $\mbasic$ that yield this dual equivalence.
\[
\begin{tikzcd}[column sep = 5pc, row sep=5pc]
\SC \arrow [r, shift left = .5ex, "\sf E"] \arrow[d, shift left = .5ex, "\sf F"] & \mbasic \arrow[l, shift left = .5ex, "\sf C"]
\arrow[dl, shift left=.5ex, "\sf R"] \\
\creg \arrow[u, shift left = .5ex, "\sf S"] \arrow[ru, shift left = .5ex,  "\sf M"] &
\end{tikzcd}
\]
The contravariant functor $\sf M : \creg \to \mbasic$ sends a completely regular space $X$ to $\iota : C^*(X) \to B(X)$,
and a continuous map $f : X \to Y$ to $(f^*, f^+)$.
\[
\begin{tikzcd}[column sep=5pc]
C^*(Y) \arrow[r, "\iota_Y"] \arrow[d, "f^*"'] & B(Y) \arrow[d, "f^+"] \\
C^*(X) \arrow[r, "\iota_X"'] & B(X)
\end{tikzcd}
\]
By Example~\ref{ex: basic extensions}(2) and Theorem~\ref{maximal}, $\sf{M}$ is a well-defined functor.
The contravariant functor $\sf R$ sends a maximal basic extension $\alpha : A \to B$ to $(X_B, \tau_\alpha)$, and a morphism
$(\rho, \sigma)$ to $\sigma_+ : X_{B'} \to X_B$.
By Theorem~\ref{lem:compactification} and Lemma~\ref{lem: sigma plus}, $\sf{R}$ is a well-defined functor.
That $\sf{F} \circ \sf{C} = \sf{S}$ follows from the definition of the functors, and $\sf{E} \circ \sf{S} \cong \sf{M}$ follows from
Example~\ref{ex: basic extensions}(2). Thus, the above diagram commutes.
\end{remark}

We conclude the article by deriving several consequences of Theorem~\ref{CR duality}.
Recall that a completely regular space $X$ is \emph{strongly zero-dimensional} if
$\beta X$ is zero-dimensional
(see, e.g., \cite[Thms.~6.2.7 and 6.2.12]{Eng89}).
We next obtain a duality for strongly zero-dimensional spaces.

\begin{theorem}
The dual equivalence between $\creg$ and $\mbasic$ restricts to a dual equivalence between the full subcategory of $\creg$ consisting
of strongly zero-dimensional spaces and the full subcategory of $\mbasic$ consisting of the maximal basic extensions $\alpha : A \to B$
for which $A$ is a clean ring.
\end{theorem}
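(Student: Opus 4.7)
The plan is to reduce this to the corollary from Section~2 characterizing Stone spaces as those $X\in\KHaus$ for which $C(X)$ is a clean ring. The link is provided by Proposition~\ref{maximal}: for a maximal basic extension $\alpha:A\to B$, the space $A$ is uniformly complete and $\alpha_\flat:X_B\to Y_A$ is (equivalent to) the Stone-\v{C}ech compactification of $(X_B,\tau_\alpha)$. In particular, if we write $X=(X_B,\tau_\alpha)$ for the completely regular space associated to $\alpha$ under Theorem~\ref{CR duality}, then $Y_A$ is homeomorphic to $\beta X$.

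Given this, I would argue as follows. First, on objects: $X$ is strongly zero-dimensional iff $\beta X$ is zero-dimensional, iff $Y_A$ is a Stone space (since $Y_A\cong\beta X$), iff $A\cong C(Y_A)$ is a clean ring by the corollary following Gelfand-Naimark-Stone duality. Conversely, starting from a strongly zero-dimensional completely regular space $X$, the functor $\sf M$ sends it to $\iota:C^*(X)\to B(X)$, and since $C^*(X)\cong C(\beta X)$ with $\beta X$ a Stone space, the same corollary yields that $C^*(X)$ is a clean ring. Thus the object classes on the two sides correspond under the equivalence.

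Once the object-level correspondence is in place, the conclusion is immediate. Both the category of strongly zero-dimensional completely regular spaces (as a full subcategory of $\creg$) and the category of maximal basic extensions $\alpha:A\to B$ with $A$ clean (as a full subcategory of $\mbasic$) are full subcategories, and the functors $\sf M$ and $\sf R$ from the previous remark restrict to them by the computation above. Since Theorem~\ref{CR duality} already supplies a dual equivalence, the restriction to these corresponding full subcategories is automatically a dual equivalence.

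There is no real obstacle beyond correctly invoking Proposition~\ref{maximal} to identify $Y_A$ with $\beta X_B$; everything else is bookkeeping that follows from combining the corollary on clean rings with the already-established Theorem~\ref{CR duality}.
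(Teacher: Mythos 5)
Your proposal is correct and follows essentially the same route as the paper: both directions reduce to the clean-ring characterization of Stone spaces via Proposition~\ref{maximal}'s identification of $Y_A$ with $\beta X_B$, and the rest is the full-subcategory bookkeeping you describe. The paper cites the Azarpanah result directly for one direction and a zero-dimensionality criterion for clean rings for the other, but these are the same facts you invoke through the Section~2 corollary.
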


\begin{proof}
Let $X$ be a strongly zero-dimensional space.
Then ${\sf M}(X)$ is the maximal extension $\iota : C^*(X) \to B(X)$.
Since $X$ is strongly zero-dimensional, $C^*(X)$ is clean by \cite[Thm.~2.5]{Aza02}.
Let $\alpha : A \to B$ be a maximal extension with $A$ clean.  Then the image under $\sf R$ of $\alpha$ is the completely
regular space $X_B$. By Proposition~\ref{maximal}, $Y_A$ is the Stone-\v{C}ech compactification of $X_B$. Since $A$ is clean,
$Y_A$ is a zero-dimensional space \cite[Thm.~5.9]{BMO13a}. Thus, $X_B$ is strongly zero-dimensional.
To complete the proof, apply Theorem~\ref{CR duality}.
\end{proof}

\begin{theorem} \label{ED}
The dual equivalence between $\creg$ and $\mbasic$ restricts to a dual equivalence between the full subcategory of $\creg$ consisting of
extremally disconnected spaces and  the full subcategory of $\mbasic$ consisting of the maximal extensions $\alpha : A \to B$
with $A \in \dbal$.
\end{theorem}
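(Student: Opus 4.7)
The plan is to mirror the structure of the preceding theorem on strongly zero-dimensional spaces, but to replace the clean-ring criterion of \cite{Aza02} with the Stone-Nakano theorem as recorded in Corollary~\ref{cor:dbal}. The extra topological ingredient I would invoke is the classical fact that a completely regular space $X$ is extremally disconnected if and only if its Stone-\v{C}ech compactification $\beta X$ is extremally disconnected (see, e.g., \cite[6M]{GJ60}).

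First, let $X \in \creg$ be extremally disconnected. Then ${\sf M}(X)$ is the maximal basic extension $\iota : C^*(X) \to B(X)$. Since $X$ is extremally disconnected, so is $\beta X$, and hence by Corollary~\ref{cor:dbal} we have $C(\beta X) \in \dbal$. Because $C^*(X) \cong C(\beta X)$, it follows that $C^*(X) \in \dbal$, so ${\sf M}(X)$ lies in the proposed subcategory of $\mbasic$.

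Conversely, let $\alpha : A \to B$ be a maximal basic extension with $A \in \dbal$. Its image under $\sf R$ is the completely regular space $(X_B,\tau_\alpha)$, and by Proposition~\ref{maximal} the compactification $\alpha_\flat : X_B \to Y_A$ is equivalent to the Stone-\v{C}ech compactification $s : X_B \to \beta X_B$. Since $A \in \dbal$, Corollary~\ref{cor:dbal} gives $Y_A \in \sf ED$, so $\beta X_B$ is extremally disconnected, and consequently $X_B$ itself is extremally disconnected. Combining these two directions with Theorem~\ref{CR duality} then delivers the restricted dual equivalence.

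There is no serious obstacle here; the proof is essentially a transfer of the Stone-Nakano duality through the machinery already assembled. The only point requiring care is the justification that extremal disconnectedness passes between $X$ and $\beta X$ in both directions, which is standard topology and needs only to be cited correctly.
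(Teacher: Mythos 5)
Your proof is correct and follows essentially the same route as the paper's: both directions rest on Corollary~\ref{cor:dbal} (Stone--Nakano), the identification of $Y_A$ with $\beta X_B$ via Proposition~\ref{maximal}, and the standard fact that $X$ is extremally disconnected iff $\beta X$ is. The only difference is the reference cited for that last fact (the paper uses \cite[Thm.~6.2.27]{Eng89}), which is immaterial.
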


\begin{proof}
If $X$ is an extremally disconnected space, then so is $\beta X$
(see, e.g., \cite[Thm.~6.2.27]{Eng89}). The image under $\sf M$ of $X$
is the maximal extension $\iota : C^*(X) \to B(X)$. Since $C^*(X)$ is isomorphic to $C(\beta X)$, we see that
$C^*(X) \in \dbal$ by Corollary~\ref{cor:dbal}.
Conversely, if $\alpha : A \to B$ is a maximal extension, then $A \cong C(Y_A)$, and if $A \in \dbal$, then
$Y_A$ is an extremally disconnected space by Corollary~\ref{cor:dbal}. By Proposition~\ref{maximal}, $Y_A$ is the
Stone-\v{C}ech compactification of $X_B$. Thus, $X_B$ is extremally disconnected (see, e.g., \cite[Thm.~6.2.27]{Eng89}).
Now apply Theorem~\ref{CR duality}.
\end{proof}

Recall that a topological space $X$ is \emph{connected} if $\varnothing,X$ are the only clopens of $X$, and that a commutative ring
$A$ is \emph{indecomposable} if $\Id(A) = \{0,1\}$.

\begin{theorem} \label{Connected}
The dual equivalence between $\creg$ and $\mbasic$ restricts to a dual equivalence between the full subcategory of $\creg$ consisting
of connected spaces and  the full subcategory of $\mbasic$ consisting of the maximal extensions $\alpha : A \to B$ with $A$ an
indecomposable ring.
\end{theorem}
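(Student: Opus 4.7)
The plan is to mimic the structure of the previous two restriction theorems (strongly zero-dimensional and extremally disconnected), routing everything through the functor $\sf M$ sending $X$ to $\iota:C^*(X)\to B(X)$ and through Proposition~\ref{maximal}(3), which identifies $Y_A$ with $\beta X_B$ for a maximal basic extension $\alpha:A\to B$. Once both directions of the correspondence between connectedness of the space and indecomposability of the algebra are verified, the restricted duality follows immediately from Theorem~\ref{CR duality}.

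First I would handle the forward direction. Let $X\in\creg$ be connected. Then $\sf{M}(X)$ is $\iota:C^*(X)\to B(X)$, so I need $C^*(X)$ to be indecomposable. Since $C^*(X)\cong C(\beta X)$, the boolean algebra $\Id(C^*(X))$ is isomorphic to the clopens of $\beta X$. Thus it suffices to observe that $\beta X$ is connected whenever $X$ is: if $\beta X = U\cup V$ with $U,V$ disjoint nonempty clopens, then $U\cap X$ and $V\cap X$ would form a clopen partition of $X$ (both are nonempty because $X$ is dense in $\beta X$), contradicting connectedness of $X$. Hence $\Id(C^*(X))=\{0,1\}$, so $C^*(X)$ is indecomposable.

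Next I would handle the converse. Suppose $\alpha:A\to B$ is a maximal basic extension with $A$ indecomposable. By Proposition~\ref{maximal}, $A$ is uniformly complete and $\alpha_\flat:X_B\to Y_A$ is equivalent to the Stone-\v{C}ech compactification $s:X_B\to\beta X_B$. Uniform completeness gives $A\cong C(Y_A)$, and indecomposability of $A$ gives $\Id(C(Y_A))=\{0,1\}$, i.e. $Y_A$ has only trivial clopens, so $Y_A$ is connected. Via the homeomorphism $Y_A\cong\beta X_B$, this says $\beta X_B$ is connected. Then $X_B$ is connected: any nonempty clopen $U\subseteq X_B$ yields $\chi_U\in C^*(X_B)$, which by the universal property of $\beta X_B$ extends to a continuous $\{0,1\}$-valued function on $\beta X_B$, producing a clopen of $\beta X_B$ that is nontrivial since $X_B$ is dense, forcing $U=X_B$.

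With both directions in place, the restrictions of the functors $\sf M$ and $\sf R$ land in the claimed full subcategories, and invoking Theorem~\ref{CR duality} completes the argument. The only mildly substantive step is the topological lemma that connectedness of $X$ and of $\beta X$ coincide for completely regular $X$; this is a standard consequence of $X$ being dense in $\beta X$ and bounded continuous $\{0,1\}$-valued functions on $X$ extending uniquely to $\beta X$, and it is the essential pivot that lets indecomposability of $C^*(X)\cong C(\beta X)$ translate to connectedness of $X$ itself.
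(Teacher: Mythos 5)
Your proposal is correct and follows essentially the same route as the paper: both directions reduce connectedness of the space to triviality of the idempotents of the algebra, use Proposition~\ref{maximal} to identify $Y_A$ with $\beta X_B$, and then invoke Theorem~\ref{CR duality}. The only cosmetic differences are that you pass through $\beta X$ in the forward direction (the paper notes directly that idempotents of $C^*(X)$ are characteristic functions of clopens of $X$) and that you spell out the descent of connectedness from $\beta X_B$ to $X_B$, which the paper handles by citing Engelking.
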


\begin{proof}
Let $X$ be connected. The image under $\sf M$ of $X$ is the maximal extension $\iota : C^*(X) \to B(X)$.
The idempotents of $C^*(X)$ are exactly the characteristic functions of clopen subsets of $X$. Since $X$ is connected, the only clopen subsets are
$\varnothing$ and $X$, so $\Id(C^*(X)) = \{0,1 \}$, and hence $C^*(X)$ is indecomposable. Conversely, if $\alpha : A \to B$ is a
maximal extension with $A$ indecomposable, then $A \cong C(Y_A)$ and $Y_A$ has no nontrivial clopen subsets. Therefore, $Y_A$ is
connected. By Proposition~\ref{maximal}, $Y_A$ is the Stone-\v{C}ech compactification of $X_B$. Thus, $X_B$ is connected (see,
e.g., \cite[Thm.~6.1.14]{Eng89}). Now apply Theorem~\ref{CR duality}.
\end{proof}

\section*{Acknowledgements}
We would like to thank the referee for careful reading and suggesting a more general statement of Lemma~\ref{rem: lifting}.

%\bibliographystyle{amsplain}
%\bibliography{../Gelfand}
\def\cprime{$'$}
\providecommand{\bysame}{\leavevmode\hbox to3em{\hrulefill}\thinspace}
\providecommand{\MR}{\relax\ifhmode\unskip\space\fi MR }
% \MRhref is called by the amsart/book/proc definition of \MR.
\providecommand{\MRhref}[2]{%
  \href{http://www.ams.org/mathscinet-getitem?mr=#1}{#2}
}
\providecommand{\href}[2]{#2}

\end{document}